\documentclass[12pt]{article}

\usepackage{graphicx}
\usepackage{amsmath,amssymb,amsthm}
\usepackage{graphicx,color}
\usepackage{subfigure}
\usepackage{enumerate}
\usepackage{fullpage}
\usepackage{url}
\usepackage{prettyref}
\usepackage{boxedminipage}

\def\bs{\backslash}

\newcommand{\comment}[1]{}

\definecolor{foocite}{rgb}{0,0.5,0}
\definecolor{foolink}{rgb}{0,0,1}
\definecolor{foourl}{rgb}{1,0,0}
\usepackage[colorlinks=true,citecolor=foocite,linkcolor=foolink,urlcolor=foourl]{hyperref}

\newenvironment{packed_enum}{
\begin{itemize}
  \setlength{\itemsep}{1pt}
  \setlength{\parskip}{0pt}
  \setlength{\parsep}{0pt}
}{\end{itemize}}

\newtheorem{theoremm}{Theoremm}[section]
\newtheorem{theorem}[theoremm]{Theorem}
\newtheorem{fact}[theoremm]{Fact}
\newrefformat{theorem}{Theorem~\ref{#1}}
\newrefformat{fact}{Fact~\ref{#1}}
\newrefformat{eq}{Equation~\eqref{#1}}
\newrefformat{chap}{Chapter~\ref{#1}}
\newrefformat{fig}{Figure~\ref{#1}}
\def\xthm[#1][#2][#3]{\newtheorem{#2}[theoremm]{#3} \newrefformat{#2}{#3 \ref{#11}}}
\xthm[#][definition][Definition]
\xthm[#][claim][Claim]
\xthm[#][conjecture][Conjecture]
\xthm[#][proposition][Proposition]
\xthm[#][lemma][Lemma]
\xthm[#][question][Question]
\xthm[#][corollary][Corollary]
\theoremstyle{remark}
\xthm[#][example][Example]

\begin{document}

\title{Counting large distances in convex polygons: \\a computational approach}

\author{Filip Mori\'{c}, David Pritchard\thanks{EPFL, Lausanne, Switzerland. We gratefully acknowledge support from the Swiss National
Science Foundation (Grant No. 200021-125287/1) and an NSERC Post-Doctoral Fellowship. }}

\maketitle

\begin{abstract}
In a convex $n$-gon, let $d_1 > d_2 > \dotsb$ denote the set of all distances between pairs of vertices, and let $m_i$ be the number of pairs of vertices at distance $d_i$ from one another. Erd\H{o}s, Lov\'{a}sz, and Vesztergombi conjectured that
$\sum_{i \le k} m_i \le kn$. Using a new computational approach, we prove their conjecture when $k \le 4$ and $n$ is large; we also make some progress for arbitrary $k$ by proving that $\sum_{i \le k} m_i \le (2k-1)n$. Our main approach revolves around a few known facts about distances, together with a computer program that searches all distance configurations of two disjoint convex hull intervals up to some finite size. We thereby obtain other new bounds such as $m_3 \le 3n/2$ for large $n$.
\end{abstract}




\section{Introduction}

Given a set $S$ of $n$ points in the plane, let $d_1>d_2>\dotsb$ be the set of all distances between pairs of points in $S$. It was shown by Hopf and Pannwitz in 1934~\cite{hp} that the distance $d_1$ (the diameter of $S$) can occur at most $n$ times, which is tight (e.g.~for a regular polygon of odd order). In 1987 Vesztergombi~\cite{ve2} showed that the second-largest distance, $d_2$, can occur at most $\frac{3}{2}n$ times; she subsequently~\cite{ve3} considered the version of the problem when the points are in convex position and showed that in this case the number of second-largest distances is at most $\frac{4}{3}n$. She also showed that both results are tight up to additive constants.

Let $m_i$ denote the number of times that $d_i$ occurs. It is known that $m_k \le 2kn$~\cite{ve2}, and moreover that $m_k \le kn$ for point sets in convex position~\cite{ve3}, while the following open conjecture would imply $m_k \le 2n$:
\begin{conjecture}[Erd\H{o}s, Moser~\cite{ve3,bmp}]\label{conjecture:unitdist}
The number of unit distances generated by $n$ points in convex position cannot exceed $2n$.
\end{conjecture}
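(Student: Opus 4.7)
Since Conjecture~\ref{conjecture:unitdist} is an open problem of Erd\H{o}s and Moser, what follows is a proposed line of attack in the spirit of this paper's methodology rather than a complete proof.

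The plan is to reduce the conjecture to a finite computational task via a discharging argument on the unit-distance graph $G$ of the convex $n$-gon $P$. I would first collect the known structural facts about $G$: it is $K_{2,3}$-free (any two vertices share at most two common unit-distance neighbours), two unit edges whose four endpoints alternate along the convex hull must cross, and monotone unit-distance walks along the hull have bounded length. These facts already yield $O(n\log n)$-type bounds by F\"uredi-style arguments; discharging must close the remaining gap down to $2n$.

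Next, I would assign each vertex an initial charge of $4$ (total $4n$, twice the target $2n$), and send one unit of charge across each unit edge in each direction. Then $|E(G)|\le 2n$ is equivalent to showing no vertex ends with negative charge. Vertices of low $G$-degree are automatically fine, so the bulk of the argument concerns vertices of degree $\ge 5$. Around such a vertex $v$ I would look at the two hull arcs into which it divides $P$, and try to classify, up to combinatorial type, the unit-distance configurations these arcs can induce relative to $v$; this is precisely analogous to the two-arc enumeration carried out later in the paper for the largest distances, and the hope is that the enumeration supplies uniform discharging inequalities that close the argument.

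The central obstacle --- and, in my view, the reason the conjecture has resisted proof since the 1950s --- is that unit distances lack the ``largeness'' property which this paper exploits for $d_1,\dots,d_k$. The paper's two-arc enumeration is finite because the largest $k$ distances force the arcs they span to be nearly antipodal and individually short, which a priori bounds the combinatorial complexity of any two-arc configuration. Unit distances carry no such scale constraint: a single arc of $P$ can host arbitrarily many unit edges, and the naive two-arc enumeration never terminates. Bridging this gap --- perhaps by first partitioning $P$ into scale-homogeneous sub-arcs relative to the polygon's diameter, and only then invoking a two-arc enumeration at each scale --- is what I expect to be the hard part, and is the point at which any straightforward transfer of the paper's techniques stalls.
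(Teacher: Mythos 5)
This statement is an open conjecture (Erd\H{o}s--Moser), and the paper neither proves it nor claims to; it is cited only as motivating context, and the paper's contributions (Theorems \ref{theorem:2kn} and \ref{theorem:all}) concern the top-$k$ largest distances, not unit distances. You correctly identify the statement as open and correctly decline to offer a proof, so there is no paper argument to compare yours against.

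As to the merit of your sketch: your diagnosis of the obstruction is accurate and matches the reason the paper's machinery does not transfer. The paper's two-interval enumeration is finite precisely because Facts \ref{fact:f2} and \ref{fact3} force any diagonal of length $\geq d_k$ to have its endpoints near two fixed ``antipodal'' anchors $t_0, b_1$, confining everything to intervals of size $O(k+L)$; unit distances carry no such global constraint, so the enumeration has no a priori bound. Your discharging setup (charge $4$ per vertex, one unit sent each way across each edge) is a reasonable bookkeeping device, but the hard content is entirely in the classification of high-degree neighbourhoods, and the proposed scale decomposition relative to the diameter is, as you say, only a hope rather than a mechanism --- in particular it is not clear why the number of combinatorial types at a given scale would be finite, nor how charges would be exchanged across scale boundaries. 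Since the conjecture is genuinely open and the paper offers no proof, none of this is a defect in your answer; it is an honest research sketch, and its chief value is the correct identification of where the paper's methods stop applying.
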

\noindent A lower bound of $2n-7$ for this conjecture is known due to Edelsbrunner and Hajnal~\cite{eh91}.

For the rest of the paper we consider only point sets in convex position. One natural question is to find how large $m_{\le k} := \sum_{i \le k} m_i$, i.e.~the number of \emph{top-$k$} distances, can be in terms of $n$. The conjectured value is:
\begin{conjecture}[Erd\H{o}s, Lov\'{a}sz, Vesztergombi \cite{elv}]
\label{conjecture:con2}
The number of top-$k$ distances generated by $n$ points in convex position is at most $kn$, i.e.~$m_{\le k} \le kn$.
\end{conjecture}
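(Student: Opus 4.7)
The plan is to rephrase the conjecture as a bound on the number of edges in the distance graph $G_{\le k}$, which has the $n$ hull vertices as its vertices and the $m_{\le k}$ top-$k$ pairs as its edges. The target $|E(G_{\le k})| \le kn$ would follow from an orientation of $G_{\le k}$ in which every vertex has out-degree at most $k$. Observe first that simply adding the best known per-level bounds ($m_1 \le n$, $m_2 \le 4n/3$, etc.) already gives $m_{\le 2} \le 7n/3 > 2n$, so the proof must capture a cross-level cancellation principle: a vertex cannot simultaneously be heavily incident to edges of many different distance levels.

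The main geometric tool I would try to leverage is a crossing-type lemma generalizing Hopf--Pannwitz, which says that in the diameter graph $G_1$ any two edges cross or share an endpoint. I would aim for an analogous statement for $G_{\le k}$: any two vertex-disjoint edges $\{a,b\}, \{c,d\}$ either cross, or the shorter one lies in a specific wedge cut off by the longer. Given such a lemma, I would orient each edge from its counter-clockwise to its clockwise endpoint (in cyclic hull order), classify the out-edges at a vertex $v$ by their distance level, and argue level-by-level that at most one chord can be oriented clockwise out of $v$ at each top-$k$ length, for a total out-degree of at most $k$. The isoceles-triangle argument underlying Hopf--Pannwitz (diameter neighbors of $v$ lie in a $60^\circ$ arc) would serve as the prototype for the level-$i$ sub-lemma.

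The main obstacle is proving this crossing/nesting lemma uniformly across all $k$ levels and translating it into the desired per-vertex out-degree bound. In particular, one has to rule out a vertex $v$ being incident to many top-$k$ edges concentrated in a narrow angular sector, and even the base Hopf--Pannwitz-style counting becomes delicate when edges are allowed to have many slightly different lengths rather than a common length. This is precisely where the paper turns to a computer search over configurations on two disjoint hull intervals of bounded size: for $k \le 4$ the configuration space is enumerable, but it blows up severely in $k$. I would also consider an induction on $k$ by peeling off the bottom level $d_k$, but this stalls because the residual top-$(k{-}1)$ structure in the same polygon has its new $d_k$ replaced by the original $d_{k+1}$, whose chord combinatorics is unrelated to what was assumed. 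A complete resolution of the conjecture for general $k$ therefore seems to require a unified discharging or weighting scheme spanning all $k$ distance levels at once, which is the essential gap left open by a computational method restricted to small $k$.
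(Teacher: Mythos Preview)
The statement is Conjecture~\ref{conjecture:con2}, which the paper does \emph{not} prove for general $k$; it is stated as an open conjecture. The paper establishes only the weaker bound $m_{\le k}\le(2k-1)n$ via the level decomposition (Lemma~\ref{lemma:klevel}), and the sharp bound $m_{\le k}\le kn$ only for $k\le4$ and large $n$ via the computer search of Section~\ref{sec:algorithm}. There is thus no proof in the paper to compare against, and your own final paragraph already acknowledges that you are describing a strategy rather than a proof.

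Beyond incompleteness, the specific mechanism you sketch fails at an identifiable step. You propose to orient $G_{\le k}$ by a fixed cyclic rule and then argue, one length at a time, that at most one $d_i$-edge is oriented out of each vertex, summing to out-degree at most $k$. But that per-length sub-claim is false already for $i=2$: Vesztergombi's construction has $m_2=\tfrac{4}{3}n-O(1)>n$, and since out-degrees in any orientation sum to the number of edges, some vertex must have out-degree at least $2$ in the $d_2$-subgraph under every orientation. Even for $i=1$ the Hopf--Pannwitz argument bounds the \emph{total} number of diameter edges, not the degree at a vertex: placing $v$ at the centre and many points on a $60^\circ$ arc of radius $d_1$ gives $v$ arbitrarily high degree in the diameter graph, and under your cyclic rule roughly half of those edges point out of $v$. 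So any orientation-based proof of the conjecture must let the out-degree budget of $k$ be shared \emph{unevenly} across the $k$ lengths at each vertex---precisely the cross-level interaction you flag as missing, and which the paper's partial results also do not supply.
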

\noindent Odd regular polygons prove $m_{\le k} = kn$ is possible. In \cite{elv} the bound $m_{\le k} \le 3kn$ is proven, and $m_{\le 2} \le 2n$ was shown in \cite{ve3}, verifying Conjecture~\ref{conjecture:con2} for $k=2$.

In this paper we give improved upper bounds on $m_k$ and $m_{\le k}$ for convex point sets, and more generally bounds for sums of the form $\sum_{t \in T}m_t$. Our first result is the following:
\begin{theorem}
\label{theorem:2kn}
For any $k \ge 1$, the number of top-$k$ distances generated by $n$ points in convex position is at most $(2k-1)n$, i.e.~$m_{\le k} \le (2k-1)n$.
\end{theorem}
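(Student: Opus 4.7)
My plan is a direct charging argument. For each vertex $v$, let $r_v(e) \in \{1, \ldots, n-1\}$ denote the rank of an edge $e$ incident to $v$ among the $n-1$ edges at $v$, sorted by decreasing length. For each edge $e = vw$ in the top-$k$ chord graph $G_{\le k}$, I charge $e$ to any endpoint $v$ satisfying $r_v(e) \le 2k-1$.

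Each vertex has exactly $2k-1$ incident edges of local rank $\le 2k-1$, so the total number of (edge, vertex) charge pairs is at most $(2k-1)n$. Provided every top-$k$ edge is chargeable to at least one endpoint, we immediately obtain $m_{\le k} \le (2k-1)n$, proving the theorem.

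The \textbf{main obstacle} is the \emph{chargeability lemma}: every top-$k$ edge $e = vw$ satisfies $\min(r_v(e), r_w(e)) \le 2k-1$. Equivalently, no top-$k$ edge has both endpoints with $\ge 2k-1$ strictly longer incident edges; since any edge strictly longer than a top-$k$ edge is a top-$(k-1)$ chord, the forbidden configuration is that both endpoints satisfy $\deg_{\le k-1} \ge 2k-1$. The key geometric tools for ruling this out are the 4-point convex inequality $|v_av_c| + |v_bv_d| > |v_av_b| + |v_cv_d|$ for $a < b < c < d$ in cyclic order, the angular-cone observation that all vertices at distance $\ge d$ from a fixed vertex $v$ lie within a bounded arc, and Vesztergombi's $m_k \le kn$ bound. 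Combining these to preclude two adjacent endpoints from both sustaining $\ge 2k-1$ longer incident chords is the core technical content of the proof. Avoiding this lemma via induction on $k$ with the step ``$m_k \le 2n$'' would instead reduce to the open Erd\H{o}s--Moser Conjecture~\ref{conjecture:unitdist} (rescale so that $d_k=1$, then the $m_k$ unit distances violate or verify the conjecture), so the direct charging approach is the route I would pursue.
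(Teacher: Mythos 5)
Your proposal and the paper take genuinely different routes, but your version has an essential unproven step. The paper partitions all $\binom{n}{2}$ diagonals into $n$ \emph{levels} (each level consisting of the diagonals $a_ja_k$ with $j+k$ fixed mod $n$, i.e.\ the ``antipodal'' classes of an auxiliary regular $n$-gon), and then shows via Fact~\ref{fact3} that any single level contains at most $2k-1$ diagonals of length $\ge d_k$: the minimal and maximal indices in the level achieving $\ge d_k$ differ by at most $2k-2$. Since there are $n$ levels, $m_{\le k}\le (2k-1)n$ follows at once. This is a bound on \emph{diagonal classes}, not a per-vertex charging, and it dispenses entirely with the need to reason about edge ranks at a vertex.

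Your charging scheme would indeed give $(2k-1)n$ \emph{if} the chargeability lemma held, but you have not proved it, and it is far from clear that it can be proved with the tools you list. Concretely, the claim that every top-$k$ edge $vw$ has some endpoint with at most $2k-2$ strictly longer incident edges is exactly where the work lies, and you say yourself that ``combining these\ldots is the core technical content of the proof.'' Two warning signs: (i) the claim is tight with no slack already on odd regular polygons (there each endpoint of a $d_k$-edge has exactly $2k-2$ strictly longer incident edges), so any proof must be sharp; and (ii) one of your stated tools is false as stated -- the vertices $u$ with $|vu|\ge d$ need \emph{not} form a contiguous arc around the polygon, since the distance-from-$v$ function along the boundary is not unimodal for a general convex polygon (a small example: $v=(0,0)$, $u_1=(0,10)$, $u_2=(7,7)$, $u_3=(10,0)$ is convex with $|vu_1|>|vu_2|<|vu_3|$). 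I'd also note that your proposed per-vertex rank argument doesn't obviously reduce to Fact~\ref{fact3}, which is the paper's actual engine. As written, this is a plan rather than a proof; to complete it you would need to prove the chargeability lemma outright, or switch to the level decomposition, which avoids the difficulty altogether.
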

\noindent Thus we close about half of the gap towards \prettyref{conjecture:con2}.

Next, by combining several known conditions on distances for convex point sets, and by using a computer program to carry out an exhaustive search on a finite abstract version of the problem, we prove the following.
\begin{theorem}
\label{theorem:all}
The distances generated by $n$ points in convex position satisfy the following bounds, for large enough $n$:
\begin{packed_enum}
\item $m_{\le 3} \le 3n, m_{\le 4} \le 4n;$
\item $m_3 \le \frac{3}{2}n, m_4 \le \frac{13}{8}n;$
\item $m_1+m_3 \le 2n, m_2+m_3 \le \frac{9}{4}n.$
\end{packed_enum}
\end{theorem}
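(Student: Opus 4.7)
The plan is to combine known structural results about top-$k$ distances in convex position with an exhaustive computer search over bounded combinatorial configurations. The unifying mechanism across all six bounds is a charging scheme in which every top-$k$ edge is assigned fractionally to a small sub-structure (a vertex, a chord, or a pair of short arcs on the convex hull), and the computer verifies a uniform upper bound on the charge received by any such sub-structure. Different target bounds in the theorem will correspond to different charging weights and different ranks $k$ at which the search is invoked.

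First I would collect the structural axioms that any top-$k$ distance configuration must obey. These include: (i) two chords of equal length in convex position must either cross or share a vertex; (ii) as one endpoint of a chord slides monotonically along the convex hull while the other is fixed, the length is strictly unimodal; (iii) the endpoints of a top-$k$ distance must leave at least $k{-}1$ vertices on each arc it separates; and (iv) the sorted list of distances incident to a vertex, read in cyclic order of the other endpoint, is unimodal. These conditions together constrain severely which "bi-arc patterns"—labeled bipartite graphs between two disjoint arcs $A,B$ of the convex hull, with labels indicating distance rank—can be realized.

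Next I would formulate the abstract enumeration. Fix a small bound $N$ on $|A|+|B|$ and enumerate all assignments of labels from $\{1,\dots,k,\infty\}$ (with $\infty$ meaning "not in the top $k$") to pairs in $A\times B$ that satisfy the axioms above. For each such pattern the program computes the ratio of (charge-weighted) top-$k$ edges to vertices, and reports the maximum over all legal patterns. Running this for appropriate choices of $k$ and weights is designed to yield the six bounds stated: for example, the bound $m_3\le\tfrac{3}{2}n$ should emerge from a uniform $\tfrac{3}{4}$ bound per vertex on the density of rank-$3$ edges across any bi-arc snapshot, and $m_1+m_3\le 2n$ from a combined rank-$1$ and rank-$3$ density that never exceeds $1$ per vertex. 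The local inequalities are then aggregated into a global one by sweeping a pair of antipodal arcs around the polygon and summing; the "large enough $n$" hypothesis absorbs lower-order boundary terms from the short arcs near the endpoints of the sweep, and also ensures that axiom (iii) is non-vacuous for all ranks considered.

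The main obstacle is twofold. First, one must be sure that the list of structural axioms fed to the program is complete enough that the search is sound—if an axiom is missed, a spurious "bad" pattern could inflate the computed maximum ratio and produce a weaker bound than is true, while conversely, using an unjustified axiom to prune could lead to a bound that is not valid for all realizable polygons. A careful catalogue of convex-distance lemmas with proofs, and a check that each pattern the program considers is consistent with being a subconfiguration of some convex point set, is therefore essential. Second, the enumeration blows up rapidly in $N$, so one must identify the smallest $N$ at which the computed ratio stabilizes at the value claimed; if $N$ must grow, symmetry reductions and canonical ordering of the arcs will be needed to keep the search tractable.
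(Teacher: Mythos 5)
Your high-level plan---enumerate distance-labelled configurations on two bounded arcs and aggregate local bounds into a global one---matches the paper in spirit, but several of your ``structural axioms'' are wrong, and the key scaffolding that makes the search sound is missing. Your axiom (i) is false as stated: two chords of equal length in a convex polygon need not cross or share a vertex (opposite sides of a square, for instance). What is actually true, and what the paper uses, is Fact~\ref{fact3}: if $a,b,c,d$ are in cyclic order with $|bc|\ge d_i$ and $|ad|\ge d_j$, then one of the arcs $(a,b)$, $(c,d)$ contains at most $i+j-3$ other vertices; only the $i=j=1$ case gives the crossing-or-sharing property, and only for diameters. Your axiom (iii), that a top-$k$ chord leaves at least $k-1$ vertices on each side, is also false---a top-$k$ distance can be realized by a hull edge. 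Any pruning based on a false axiom can eliminate legitimately realizable polygons, in which case the computed upper bound on the ratio may simply be wrong.

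More fundamentally, you have not addressed the one technical obstacle that dictates the entire shape of the paper's proof: the two arcs in a bi-arc snapshot must be \emph{disjoint} for the enumeration to be exhaustive, and this fails exactly when some chord of length $\ge d_k$ has few hull vertices between its endpoints. The paper handles this by defining \emph{special diagonals} (length $\ge d_k$, at most $\ell$ vertices between the endpoints) and proving Lemma~\ref{lemma:constant-new}: if a special diagonal exists, then already $m_{\le k}\le n+C(k,\ell)$, so only polygons \emph{without} special diagonals need be fed to the search, and for those the two intervals are guaranteed disjoint. This is also the real reason ``large enough $n$'' is needed: to absorb the constant $C(k,\ell)$, not to absorb boundary terms or to make your axiom (iii) non-vacuous. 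Finally, the aggregation step is not a fractional charging scheme in the paper. The paper decomposes the diagonals into $n$ \emph{levels} $L_i=\{a_ja_k:\;j+k\equiv i\pmod n\}$, uses the search to prove that every level $i$ admits some $m'\le m$ with at most $\alpha m'$ target distances in levels $i,\dots,i+m'-1$ (an analogue of Lemma~\ref{lemma:ves}), and then chains these bounds cyclically around all $n$ levels. Your ``sweeping a pair of antipodal arcs'' gestures at this but needs the level decomposition and Fact~\ref{fact3} to be made precise: Fact~\ref{fact3} is what guarantees that all top-$k$ chords in a bounded block of consecutive levels have endpoints confined to two intervals of bounded size, which is exactly what makes the finite enumeration complete.
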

\noindent In particular we verify Conjecture~\ref{conjecture:con2} for $k \le 4$ and $n$ large. For $m_3$ and $m_2+m_3$ the bound is as good as can be obtained by our abstract version of the problem, as witnessed by periodic patterns achieving $m_3 = \frac{3}{2}n$ and $m_2+m_3 = \frac{9}{4}n$, but we do not know if any convex polygon can realize these distances; we elaborate in \prettyref{sec:tightness}.

The proof of \prettyref{theorem:all} uses a computer program to make certain types of automatic deductions, as well as the following lemma to eliminate long distances ``near" the boundary:
\begin{lemma}
\label{lemma:constant-new}
For any $k\geq1$ and $\ell\geq0$, there is a constant $C(k,\ell)$ such that the following holds: in a convex polygon, if there are $\ell$ or less vertices between some vertices $a$ and $b$ such that $|ab|\geq d_k$, then the number of top-$k$ distances satisfies $m_{\le k} \le n+C(k,\ell)$.
\end{lemma}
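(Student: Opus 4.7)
My plan is to reduce the problem to an accounting argument on ``far arcs'' and exploit the thinness that the hypothesis forces on the polygon.

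I would first set up notation: label the vertices $v_1,\dots,v_n$ cyclically with $a=v_1$ and $b=v_{s+1}$ for some $s\le\ell+1$, so that $v_2,\dots,v_s$ constitute the short arc. For each vertex $v$, let $F_v:=\{u:|uv|\ge d_k\}$; convex position forces $F_v$ to be a consecutive cyclic arc, and $\sum_v|F_v|=2m_{\le k}$. The desired bound $m_{\le k}\le n+C(k,\ell)$ is therefore equivalent to $\sum_v|F_v|\le 2n+2C(k,\ell)$, which in turn says that on average a vertex may have at most two far partners, with only a constant number of ``exceptional'' vertices absorbing any excess.

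The core of the argument is geometric. Since $|ab|\ge d_k$ and only $\ell$ vertices separate $a$ from $b$ along one arc, the polygon must be ``pinched'' near $a$ and $b$: informally, $a$ and $b$ behave like the two ends of an essentially one-dimensional shape, and every top-$k$ pair is forced to concentrate near this axis. I would make this precise by proving the following structural claim: for every vertex $v$ whose cyclic position lies outside a constant-size neighborhood of $\{a,b\}$, the far arc $F_v$ is contained in two pockets of bounded size near $a$ and $b$. A natural tool is Ptolemy's inequality applied to the convex quadrilateral on $\{a,b,v,w\}$ with $w\in F_v$: combining $|ab|\ge d_k$ and $|vw|\ge d_k$ with the constraint that the four points are in convex position pins $w$ to a constant-size window near one of $a,b$. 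A complementary Hopf--Pannwitz-style crossing argument would then handle the vertices that lie inside the neighborhood of $\{a,b\}$, by showing that their own far arcs interact with the chord $ab$ in a restricted fashion.

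Once the structural claim is in place, the sum $\sum_v|F_v|$ decomposes into two parts: typical vertices (outside the neighborhood of $\{a,b\}$) contribute at most two each on average via the pocket structure, while the constantly many vertices inside the neighborhood contribute at most a constant in total after suitable refinement. This yields $\sum_v|F_v|\le 2n+O_{k,\ell}(1)$, hence the lemma. The main obstacle is making the structural claim quantitative: the exact value of $C(k,\ell)$ depends on a case analysis over the orientation of $w$ relative to the chord $ab$ and the position of $v$, and the constants will grow with both $k$ and $\ell$ because of the number of sub-cases. For the small $k,\ell$ needed in \prettyref{theorem:all}, however, the case analysis should remain tractable.
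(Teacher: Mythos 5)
Your reduction to $\sum_v |F_v| = 2 m_{\le k}$ is fine, and your ``pocket'' structural claim — every top-$k$ diagonal has at least one endpoint in a bounded-size neighborhood $S$ of $\{a,b\}$ — is correct and matches the paper's use of Fact~\ref{fact3}. But there is a genuine gap at the accounting step. Knowing $F_v \subseteq S$ for $v \notin S$ only gives $|F_v| \le |S| = O(k+\ell)$, hence $\sum_v |F_v| = O\bigl(n(k+\ell)\bigr)$; nothing in the pocket structure justifies your assertion that ``typical vertices contribute at most two each on average.'' You need a reason why almost every $v \notin S$ has $|F_v| \le 1$, and neither Ptolemy nor a Hopf--Pannwitz crossing argument supplies that as stated. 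The paper's proof closes exactly this gap with a rigidity observation that your sketch lacks: if $v$ has two far partners $s, s' \in S$, then $v$ lies on the intersection of two circles (of known radii $d_i, d_j$, for some $i,j \le k$) centered at the known points $s, s'$, and so is one of at most two points. Summing over the $O\bigl((k+\ell)^2 k^2\bigr)$ choices of $(s,s',d_i,d_j)$ bounds $\sum_v \binom{|F_v|}{2}$ by a constant, hence only $O(1)$ vertices outside $S$ have $|F_v| \ge 2$, and the $n + C(k,\ell)$ bound follows. Without an argument of this kind (or a substitute that controls the number of multi-degree vertices), your decomposition cannot descend from $O(n(k+\ell))$ to $n + O(1)$.

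A smaller point: the remark that convexity forces $F_v$ to be a single cyclic arc is false. The distance from a fixed vertex of a convex polygon to the others, in cyclic order, need not be unimodal — e.g.\ the convex quadrilateral $(0,0),(2,1),(0,1.5),(-2,1)$ has distances $\sqrt5, 1.5, \sqrt5$ from $(0,0)$. This does not affect the handshake identity you use, but it should be dropped, and it signals that your later appeal to ``a case analysis over the orientation of $w$'' would need care.
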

\noindent The detailed bound we obtain is of the form $C(k, \ell) = O(k^2(k+\ell)^2)$. In an earlier version of this paper\footnote{\url{http://arxiv.org/abs/1103.0412v1}} we proved results like  ``$m_{\le 3} \le 3n + O(1)$" which are weaker for large $n$ but better for small $n$, using the following alternative lemma:
\begin{lemma}
\label{lemma:constant-old}
For any $k\geq1$ and $\ell\geq0$, there is a constant $C'(k,\ell)$ such that the following holds. In a convex polygon, at most $C'(k,\ell)$ diagonals $ab$ have both (i) $\ell$ or less vertices between $a$ and $b$ and (ii) $|ab|\geq d_k$.
\end{lemma}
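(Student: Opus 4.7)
The plan rests on the convex quadrilateral inequality (CQI): in any convex quadrilateral $ABCD$ taken in cyclic order, $|AC|+|BD|>|AB|+|CD|$. The first step is to observe that if $ab$ and $a'b'$ are two diagonals satisfying both (i) and (ii) whose short arcs are disjoint, then their four endpoints lie in convex position in cyclic order $a,b,a',b'$, and CQI yields $|aa'|+|bb'|>|ab|+|a'b'|\geq 2d_k$; hence $\max(|aa'|,|bb'|)>d_k$ is a top-$(k-1)$ distance of the full polygon. Thus every pair of disjoint ``bad'' diagonals forces one such derived top-$(k-1)$ distance between their endpoints.

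The second step bounds the maximum number $M$ of bad diagonals with pairwise disjoint short arcs. Fix a maximal such family $d_1=a_1b_1,\ldots,d_M=a_Mb_M$. The $2M$ endpoints form a convex sub-polygon. Applying CQI to all $\binom{M}{2}$ pairs produces $\binom{M}{2}$ distinct vertex pairs among these endpoints at distance $>d_k$---distinct because each pair $\{a_i,a_j\}$ or $\{b_i,b_j\}$ is uniquely determined by $\{i,j\}$. Removing points only deletes entries from the sorted distance list, so the sub-polygon's $(k-1)$-th largest distance $d_{k-1}^{\mathrm{sub}}$ is at most $d_{k-1}$, and each derived distance is therefore among the sub-polygon's own top-$(k-1)$. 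Applying the known linear bound $m_{\leq k-1}\leq 3(k-1)n$ of Erd\H{o}s--Lov\'asz--Vesztergombi \cite{elv} to the sub-polygon (with $n$ replaced by $2M$) yields
\[\binom{M}{2}\leq 3(k-1)\cdot 2M = 6(k-1)M,\]
which forces $M\leq 12k-11$, independent of $n$.

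A final clustering argument lifts the bound on $M$ to a bound on the total count $N$ of bad diagonals. Every bad diagonal has its short arc overlapping (sharing a vertex with) the short arc of some family member $d_i$, otherwise the family could be extended. Two overlapping short arcs, each of $\leq\ell+2$ vertices, sit within a common cyclic window of $\leq 2\ell+3$ consecutive polygon vertices, so each $d_i$ is overlapped by at most $\binom{2\ell+3}{2}$ bad diagonals. Summing yields $N\leq M\binom{2\ell+3}{2}=O(k\ell^2)$, which serves as $C'(k,\ell)$.

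The main obstacle lies in the second step: the raw CQI output yields $\binom{M}{2}$ long distances but only a priori as top-$(k-1)$ distances of the full polygon, which would merely give $\binom{M}{2}\leq 3(k-1)n$ and hence a bound $M=O(\sqrt{kn})$ depending on $n$. The key trick is to reinterpret these as top-$(k-1)$ distances of the $2M$-point sub-polygon via the monotonicity $d_{k-1}^{\mathrm{sub}}\leq d_{k-1}$, which replaces $n$ by $2M$ in the ELV bound and pins $M$ to $O(k)$.
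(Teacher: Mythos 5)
The paper does not prove Lemma~\ref{lemma:constant-old} in the version at hand; it only cites an earlier arXiv version for it and states $C'(k,\ell)=O(k\ell^2)$. (The neighboring Lemma~\ref{lemma:constant-new}, which supersedes it in the present proof of Theorem~\ref{theorem:all}, is proven by an unrelated bipartite degree-counting argument: all top-$k$ chords have an endpoint in a bounded interval $S$, two chords from a fixed pair in $S$ determine their common far endpoint, hence $\sum_x\binom{\deg(x)}{2}=O(k^2(k+\ell)^2)$.) So your proof must be evaluated on its own merits, and it takes a genuinely different route.

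Your argument is essentially correct. The CQI step correctly forces, for any two bad diagonals with disjoint short arcs, a derived distance $>d_k$ between their endpoints, and the monotonicity $d^{\mathrm{sub}}_{k-1}\leq d_{k-1}$ legitimately turns each such derived distance into a top-$(k-1)$ distance of the $2M$-vertex sub-polygon (if the sub-polygon has fewer than $k-1$ distinct distances, every one of its pairs is trivially a top-$(k-1)$ pair and the ELV bound still applies). The distinctness of the $\binom{M}{2}$ derived pairs holds because all the $a_i$ and $b_i$ are distinct. This yields $\binom{M}{2}\le 3(k-1)\cdot 2M$, hence $M\le 12k-11$; the idea of replacing $n$ by $2M$ in the ELV bound is the real content here and it works.

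There is one small but genuine gap in the final clustering step. The window of $\leq 2\ell+3$ vertices you describe is the union of the bad diagonal's arc with the arc of the family member $d_i$; this union depends on the bad diagonal, so it is not a fixed window in which you can simply count $\binom{2\ell+3}{2}$ pairs. The fix is easy: any bad diagonal overlapping $d_i$ shares a vertex $v$ with $d_i$'s arc, and both its endpoints lie within $\ell+1$ steps of $v$, hence within the fixed window obtained by extending $d_i$'s arc by $\ell+1$ vertices on each side, which has at most $3\ell+4$ vertices. So at most $\binom{3\ell+4}{2}$ bad diagonals overlap a given family member, and $N\le M\binom{3\ell+4}{2}=O(k\ell^2)$. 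The asymptotic conclusion $C'(k,\ell)=O(k\ell^2)$, matching the form the paper states, stands.
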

\noindent In the latter, $C'(k,\ell) = O(k\ell^2)$. We do not think either lemma is tight.

In \prettyref{sec:levels} we describe \emph{levels}, a key element in our approach. In \prettyref{sec:geometry} we collect geometric facts used by the algorithm. We prove \prettyref{lemma:constant-new} in \prettyref{sec:constant}. The proof of our main result, \prettyref{theorem:all}, consists of the algorithmic approach described in \prettyref{sec:algorithm} together with our computational results stated in \prettyref{sec:results}. We conclude with suggestions for future work.

\section{Levels}\label{sec:levels}
We use the term \emph{diagonal} to mean any line segment connecting two points of $S$, including sides of the convex hull of $S$. We will partition the diagonals into $n$ \emph{levels} in the following way.
Let $S = \{a_1,a_2,\dots ,a_n\}$ be the vertex set of our convex polygon, ordered clockwise. Then \emph{level} $i$ is the set of diagonals
$$L_i := \{a_ja_k \mid j+k \equiv i \bmod{n}\},$$
where the index $i$ can be taken modulo $n$.
Equivalently, consider an auxiliary regular $n$-gon $b_1b_2\dots b_n$, then two diagonals $a_ia_j$ and $a_ka_l$ lie in the same level when the corresponding segments $b_ib_j$ and $b_kb_l$ are parallel. We illustrate this in \prettyref{fig:levels}(a).

\begin{figure}[h]
\centering
\subfigure[]{\includegraphics[scale=1.1]{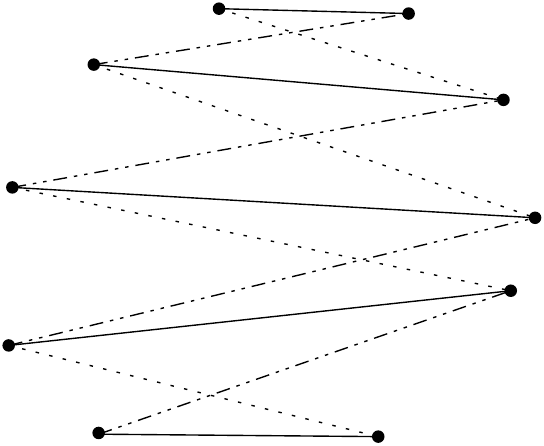}
	\label{f1}  \hspace{5mm}}
  \subfigure[]{\label{f4}
		\includegraphics[scale=1]{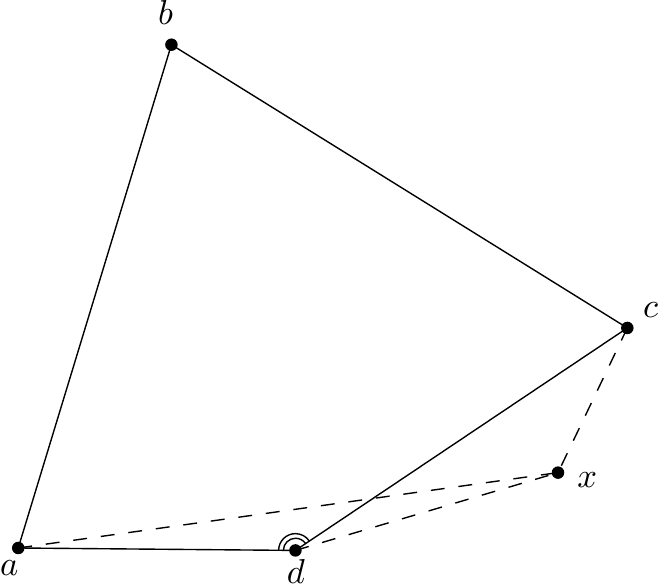}  \hspace{5mm} }
	 \caption{(a) Three consecutive levels of diagonals in a convex decagon. (b) Proof of Fact~\ref{fact:f2}.}\label{fig:levels}
\end{figure}

Levels are used in the following way to prove \prettyref{theorem:2kn}: (i.e.,~$m_{\le k} \le (2k-1)n$).
\begin{proof}[Proof of \prettyref{theorem:2kn}]
In the next section, we prove \prettyref{lemma:klevel}: in any level, there are at most $2k-1$ diagonals of length $\ge d_k$. Since there are at most $n$ levels, we are done.
\end{proof}

\section{Geometric Facts}\label{sec:geometry}
To begin this section, we collect 4 geometric facts from the literature~\cite{ve3,elv,alt}, which will be used in our computer program. For completeness, we include the proofs. The first two facts were used in~\cite{ve3,elv}.
\begin{fact}
\label{fact:f1}
If $abcd$ is a convex quadrangle, then $|ab|+|cd|<|ac|+|bd|$.
\end{fact}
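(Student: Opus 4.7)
The plan is to use the classical argument based on the triangle inequality applied to the two triangles formed by the diagonals. Since $abcd$ is a convex quadrangle, the diagonals $ac$ and $bd$ intersect at some interior point; call it $e$. Because $e$ lies on both diagonals, we have $|ac| = |ae| + |ec|$ and $|bd| = |be| + |ed|$.

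Next, I would apply the (strict) triangle inequality to the two triangles $abe$ and $cde$, which are non-degenerate since $a,b,c,d$ are in convex position (so no three are collinear). This gives
\[
|ab| < |ae| + |eb|, \qquad |cd| < |ce| + |ed|.
\]
Adding these two inequalities and regrouping the right-hand side yields
\[
|ab| + |cd| < (|ae| + |ec|) + (|be| + |ed|) = |ac| + |bd|,
\]
which is exactly the claimed inequality.

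The only subtlety — and really the only point worth checking carefully — is that one must justify why the diagonals of a convex quadrangle actually cross in their interiors; this is where convexity is used. Once that is in place, the rest is a one-line application of the triangle inequality to two triangles. I do not anticipate any real obstacle: the proof is short, standard, and self-contained, which is why it is stated without proof in the references cited.
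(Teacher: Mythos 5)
Your proof is correct and follows essentially the same approach as the paper: both let the diagonals meet at an interior point (you call it $e$, the paper calls it $p$) and then add the two strict triangle inequalities for triangles $abe$ and $cde$. You spell out the regrouping and the non-degeneracy point a bit more explicitly, but the argument is the same.
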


\begin{proof}
Let $p$ be the intersection point of the diagonals $ac, bd$. Then by the triangle inequality,
\begin{equation}|ab|+|cd|<|ap|+|bp|+|cp|+|dp|=|ac|+|bd|\,.\tag*{\qedhere}\end{equation}\end{proof}


\begin{fact}
\label{fact:f2}
If $a,b,c,d$ are vertices of a convex polygon in clockwise order, then at least one of these four cases must occur:
\begin{packed_enum}
\item $|ax|>|ad|$ for all vertices $x$ of the polygon between $c$ and $d$, including $c$;
\item $|bx|>|bc|$ for all vertices $x$ of the polygon between $c$ and $d$, including $d$;
\item $|cx|>|bc|$ for all vertices $x$ of the polygon between $a$ and $b$, including $a$;
\item $|dx|>|ad|$ for all vertices $x$ of the polygon between $a$ and $b$, including $b$.
\end{packed_enum}
\end{fact}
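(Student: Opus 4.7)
I would approach this by contradiction: suppose all four conditions fail, and derive a contradiction via Fact~\ref{fact:f1}.

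The core observation is that if (1) fails at $x = c$, yielding $|ac| \le |ad|$, and (2) fails at $x = d$, yielding $|bd| \le |bc|$, then summing gives $|ac| + |bd| \le |ad| + |bc|$. But Fact~\ref{fact:f1}, applied to the convex quadrangle $bcda$ (a cyclic relabeling of $abcd$), yields $|bc| + |ad| < |bd| + |ca|$, contradicting the summed inequality. Symmetrically, the combination of (3) failing at $x = a$ (giving $|ac| \le |bc|$) and (4) failing at $x = b$ (giving $|bd| \le |ad|$) yields $|ac| + |bd| \le |bc| + |ad|$, again contradicted by Fact~\ref{fact:f1}. These observations completely handle the case when the polygon has no vertices strictly between $a$ and $b$ or between $c$ and $d$, since then the only possible witnesses to failure are $c$, $d$, $a$, and $b$ themselves.

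In general, the polygon may have additional vertices between $a$ and $b$ or between $c$ and $d$, so the failure of a condition may be witnessed by an interior vertex $x$ on one of these arcs. To handle such cases I would carry out a case analysis on the possible witness-location combinations, and in each sub-case apply Fact~\ref{fact:f1} to suitably chosen convex quadrangles formed by the witness vertices together with some of $a, b, c, d$. For instance, if (1) fails with witness $x_1$ strictly between $c$ and $d$ (so $|ax_1| \le |ad|$), applying Fact~\ref{fact:f1} to the convex quadrangle $a, c, x_1, d$ yields $|ac| + |x_1 d| < |a x_1| + |cd| \le |ad| + |cd|$. Combining such bounds with analogous inequalities arising from the failures of (2), (3), (4), plus triangle-inequality estimates, should allow one to mimic the endpoint argument and produce the desired contradiction.

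The main obstacle is the combinatorial case split, as there are up to sixteen sub-cases depending on whether each of the four failures is witnessed by an arc endpoint or by an interior vertex. To keep the analysis manageable I would choose witnesses canonically---say, the one closest to a specified endpoint in each case---and look for a small set of ``extremal'' sub-cases whose treatment by chained applications of Fact~\ref{fact:f1} covers the general case by monotonicity or by exploiting the symmetry of the four conditions under the involutions $a\leftrightarrow b$, $c\leftrightarrow d$. The hardest sub-cases are likely those in which all four witnesses lie strictly in the interior of the arcs, as then Fact~\ref{fact:f1} must be iterated on several sub-quadrangles rather than applied just once.
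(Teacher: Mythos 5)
The paper's proof is short, direct, and avoids Fact~\ref{fact:f1} altogether: since the four interior angles of the quadrilateral $abcd$ sum to $2\pi$, at least one angle is non-acute, say $\angle cda\geq\pi/2$; by convexity the ray $dx$ rotates monotonically as $x$ moves from $c$ to $d$, so $\angle xda\geq\angle cda\geq\pi/2$ for every such $x$, hence $|ax|>|ad|$ (the side opposite a non-acute angle is strictly the longest). That single angle gives case (1), with each quadrilateral angle corresponding to one of the four cases. Your route is genuinely different: you argue by contradiction and try to sum failure inequalities against Fact~\ref{fact:f1}.

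The endpoint sub-case of your argument is correct and self-contained: if (1) fails at $x=c$ and (2) fails at $x=d$, adding $|ac|\le|ad|$ and $|bd|\le|bc|$ contradicts $|ad|+|bc|<|ac|+|bd|$ (Fact~\ref{fact:f1} on $bcda$), and the $(3),(4)$ pair is symmetric. However, there is a genuine gap in the general case. Failure of a condition only supplies \emph{some} witness $x$ on the arc, and the distance from $a$ to vertices along the arc from $c$ to $d$ is not monotone in a convex polygon; in particular $|ac|\le|ad|$ does not follow from the existence of an interior witness with $|ax_1|\le|ad|$. So you cannot reduce to the endpoint inequalities, and the sketch that ``combining such bounds \dots should allow one to mimic the endpoint argument'' is where the proof actually needs to happen. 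The sample inequality you derive, $|ac|+|x_1d|<|ad|+|cd|$, involves the side $|cd|$, which does not cancel against anything produced by the other three failures, so it is not clear how a contradiction would emerge even after choosing witnesses canonically. In short, the core idea (contradiction + quadrilateral inequality) is plausible but unfinished, whereas the paper's angle argument resolves the monotonicity issue for free and needs no case split at all. If you want to stay with Fact~\ref{fact:f1}, you would at minimum need a lemma controlling how $|ax|$ varies along the arc; but it is simpler to switch to the angle-sum argument, which also explains \emph{why} exactly one of the four conditions must hold.
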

\begin{proof}
Since the sum of the angles of quadrilateral $abcd$ is $2\pi$, at least one angle is non-acute. Without loss of generality
let $\angle cda\geq\frac\pi2$. Then for any vertex $x$ of the polygon between $c$ and $d$ we have that
$\angle xda\geq\angle cda\geq\frac\pi2$, and, thus, $|ax|>|ad|$ (see Figure \ref{f4}).
\end{proof}

The special case $i=j$ of the following fact appears in \cite{elv}.
\begin{fact}
\label{fact3}
If $a,b,c,d$ are vertices of a convex polygon listed in clockwise order, such that
$|bc|\geq d_i$ and $|ad|\geq d_j$, where $d_i$ and $d_j$ are the $i$-th and the $j$-th largest distances among
vertices of the polygon, then either between $a$ and $b$ or between $c$ and $d$ there are no more than $i+j-3$
other vertices of the polygon.
\end{fact}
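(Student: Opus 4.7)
The plan is to prove \prettyref{fact3} by strong induction on $i+j$, with Fact~\ref{fact:f2} serving as the engine that lets us trade a large distance along one arc for a decrement in an index. Let $p$ and $q$ denote the numbers of vertices strictly between $a,b$ and between $c,d$ respectively, enumerated in clockwise order as $x_1,\dots,x_p$ and $y_1,\dots,y_q$. I would argue by contradiction, assuming $p,q\ge i+j-2$ and driving towards $\min(p,q)\le i+j-3$.

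For the base case $i=j=1$, the hypotheses force $|bc|=|ad|=d_1$. Applying Fact~\ref{fact:f2} to $abcd$, each of its four cases (taken at the relevant endpoint vertex $c$, $d$, $a$, or $b$) would require one of $|ac|,|bd|,|ca|,|db|$ to strictly exceed $d_1$, which is impossible. Hence no such quadruple exists and the statement holds vacuously.

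For the inductive step ($i+j\ge 3$), apply Fact~\ref{fact:f2} to $abcd$. In Case~1 we obtain $|ay_q|>|ad|\ge d_j$, whence $|ay_q|\ge d_{j-1}$ (forcing $j\ge 2$). Applying the induction hypothesis to the clockwise quadruple $a,b,c,y_q$ with indices $(i,j-1)$---the relevant arcs now have $p$ and $q-1$ vertices---yields $\min(p,q-1)\le i+j-4$; since $q-1\ge i+j-3>i+j-4$ we conclude $p\le i+j-4$, contradicting $p\ge i+j-2$. Cases~2, 3, 4 are entirely symmetric: one substitutes $y_1$ for $c$, $x_p$ for $b$, or $x_1$ for $a$, each time decreasing one of $i,j$ by one and shrinking exactly one of the two arcs by a single vertex.

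The main obstacle I foresee is the case-analysis bookkeeping: in each case of Fact~\ref{fact:f2} I must verify that the substituted vertex preserves the clockwise order of the new quadruple, that the unchanged distance inequality is still in force, and that precisely the intended arc shrinks by one. A subtler point is that whenever $i=1$ or $j=1$, two of the four cases of Fact~\ref{fact:f2} are automatically impossible (they would demand a distance exceeding $d_1$); fortunately these are exactly the cases whose induction step would need to decrement a unit index, so in every remaining case we keep $i',j'\ge 1$ and the induction goes through cleanly.
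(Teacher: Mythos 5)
Your proof is correct and takes essentially the same approach as the paper: induction on $i+j$ driven by \prettyref{fact:f2}, with one arc shrinking by a vertex and one index decrementing at each step. The only notable difference is that the paper dispatches the base case $i=j=1$ via \prettyref{fact:f1} (two non-crossing longest diagonals must share a vertex) rather than via \prettyref{fact:f2}; both work, and you are in fact more explicit than the paper about why the $i=1$ or $j=1$ boundary cases never decrement an index to zero.
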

\begin{proof}
Let us denote without loss of generality $a = a_1, b = a_x, c = a_y, d = a_z$. We will show $\min\{x-1, z-y\} \le i+j-2$ which proves the lemma. We use induction on $i+j$. The base case $i=j=1$ amounts to saying that any two non-crossing $d_1$'s must share a vertex, which follows by \prettyref{fact:f1}.

For the inductive step, we apply \prettyref{fact:f2}. Suppose that the 1st of the 4 cases happens, so $d' := a_{z-1}$ satisfies $|ad'| > |ad|$; the other cases are similar. Consequently, $|ad'| \ge d_{j-1}$. By induction, $\min\{x-1, (z-1)-y\} \le i+(j-1)-3$, from which the desired result follows.
\end{proof}
\begin{figure}[h]
\centering
\subfigure[]{\includegraphics[scale=1]{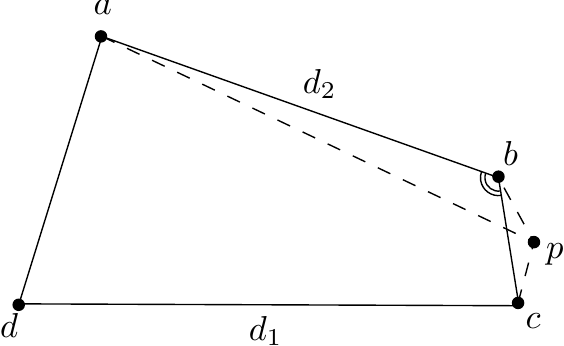}
	\label{f5}  \hspace{5mm}
	}
  \subfigure[]{\label{f6}
		\includegraphics[scale=1]{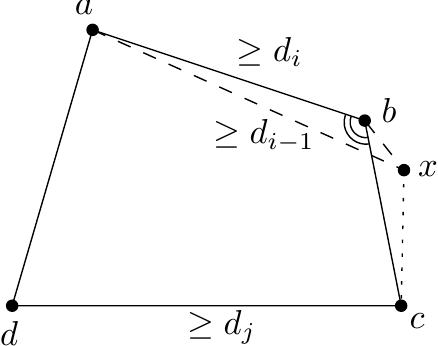}  \hspace{5mm}
      }

	 \caption{ (a) Proof of Fact \ref{fact3}, base case $i=2$, $j=1$; (b) Proof of Fact \ref{fact3}, inductive step}
\end{figure}
\comment{
Assume for simplicity that $n$ is even (the other case is almost identical).
We show it just for the level $L_1=\{a_1a_n,a_2a_{n-1},\dots,a_{n/2}a_{n/2+1}\}$, since the proof for other levels is analogous.
Let $i$ and $j$ be respectively minimum and maximum indices from $1$ to $n/2$ such that
$|a_ia_{n-i+1}|,|a_ja_{n-j+1}|\geq d_k$ (see Figure \ref{f2}).
By Fact \ref{fact3} we have that $j-i\leq2k-2$. By the choice of $i$ and $j$ all top-$k$ distances in this level
are of the form $a_{t}a_{n-t+1}$ with $i\leq t\leq j$. Therefore, there are at most $2k-1$ top-$k$ distances in $L_1$,
which finishes the proof.
}

The following is a strengthening of a result of Altman, obtained by removing all non-essential conditions from the hypothesis of~\cite[Lemma 1]{alt} but using the same proof. (He considered only the case where $|a_1a_m| = d_1$.)
\begin{fact}
\label{fact4}
Let $a_1\dots a_n$ be a convex polygon. If $1\leq i<j\leq k<\ell<m$ and $|a_1a_m|\geq\max\{|a_1a_k|,|a_ja_m|\}$, then $|a_ia_\ell|>\min\{|a_ia_k|,|a_ja_\ell|\}$.
\end{fact}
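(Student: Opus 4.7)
The plan is to argue by contradiction, adapting Altman's original proof but using only the two weaker inequalities in the hypothesis. Suppose $|a_ia_\ell|\le|a_ia_k|$ and $|a_ia_\ell|\le|a_ja_\ell|$; the goal is to contradict $|a_1a_m|\ge\max\{|a_1a_k|,|a_ja_m|\}$.

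Step 1 uses only the two contradiction assumptions. Applying \prettyref{fact:f1} to the convex quadrilateral $a_ia_ka_\ell a_m$ (which is in correct cyclic order since $i<k<\ell<m$) gives $|a_ia_k|+|a_\ell a_m|<|a_ia_\ell|+|a_k a_m|$; combining with $|a_ia_\ell|\le|a_ia_k|$ simplifies to $|a_\ell a_m|<|a_k a_m|$. Symmetrically, applying \prettyref{fact:f1} to $a_1a_ia_ja_\ell$ (handling $i=1$ as a degenerate case) and using $|a_ia_\ell|\le|a_ja_\ell|$ yields $|a_1a_i|<|a_1a_j|$. The pair of derived inequalities is exchanged by the involution $1\leftrightarrow m$, $i\leftrightarrow\ell$, $j\leftrightarrow k$, which also exchanges the two hypotheses; the rest of the proof should respect this symmetry.

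Step 2 brings the hypotheses into play. A natural device is the point $X$ where the diagonals $a_1a_k$ and $a_ja_m$ cross (they do cross since $a_1,a_j,a_k,a_m$ form a convex quadrilateral). Then $|a_1a_k|=|a_1X|+|Xa_k|$ and $|a_ja_m|=|a_jX|+|Xa_m|$, and the triangle inequality in $\triangle a_1Xa_m$ combined with $|a_1a_m|\ge|a_1a_k|$ forces $|Xa_m|>|Xa_k|$; the other hypothesis analogously gives $|a_1X|>|Xa_j|$. The plan is to combine these comparisons at $X$ with the inequalities $|a_\ell a_m|<|a_ka_m|$ and $|a_1a_i|<|a_1a_j|$ from Step 1 to force a contradiction, for instance by relating $X$ to the third ``long'' diagonal $a_ia_\ell$ of the hexagon $a_1a_ia_ja_ka_\ell a_m$.

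I expect Step 2 to be the main obstacle: identifying the precise algebraic combination that closes the argument. If the chord-intersection route proves awkward, a purely algebraic fallback is to apply \prettyref{fact:f1} to additional quadrilaterals among $\{a_1,a_i,a_j,a_k,a_\ell,a_m\}$ (for example $a_1a_ja_ka_m$, $a_1a_ka_\ell a_m$, and $a_1a_ja_\ell a_m$) and to eliminate the auxiliary distances $|a_ka_m|$, $|a_\ell a_m|$, $|a_1a_i|$, $|a_1a_j|$ until a reversed triangle inequality appears. The paper's remark that Altman's proof still works assures that such a combination exists; the only real content is to verify that no step in it uses $|a_1a_m|$ as an upper bound on any distance other than $|a_1a_k|$ or $|a_ja_m|$.
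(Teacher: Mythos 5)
Your proposal is not a proof; Step~2 is explicitly left open, and the ideas you have assembled do not visibly lead to a contradiction. The paper's argument is of a quite different kind and never invokes \prettyref{fact:f1} at all. It is an angle-chasing argument: assume $|a_ia_\ell|\le\min\{|a_ia_k|,|a_ja_\ell|\}$, let $x$ and $y$ be the points where $a_1a_j$ and $a_ma_k$ meet the segment $a_ia_\ell$, and chain four angle inequalities. Two come from the exterior-angle theorem at $x$ and $y$; two come from the side--opposite-angle comparison in triangles $a_ia_ja_\ell$ and $a_ia_ka_\ell$ using the contradiction hypotheses; two more come from the nesting of angles at $a_j$ and $a_k$ by convexity; and the last two come from the side--opposite-angle comparison in triangles $a_1a_ja_m$ and $a_1a_ka_m$ using the stated hypothesis $|a_1a_m|\ge\max\{|a_1a_k|,|a_ja_m|\}$. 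The contradiction closes because the angle sum of the quadrilateral $a_1xya_m$ forces the exact identity $\angle a_jxa_\ell+\angle a_kya_i=\angle a_ja_1a_m+\angle a_ka_ma_1$. Note the auxiliary points lie \emph{on} $a_ia_\ell$, not at the crossing of $a_1a_k$ with $a_ja_m$ as in your plan.

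Two concrete problems with your proposal. First, your Step~2 inequalities $|Xa_m|>|Xa_k|$ and $|a_1X|>|Xa_j|$ involve only the four points $a_1,a_j,a_k,a_m$ and never touch $a_i$, $a_\ell$, or the quantity $|a_ia_\ell|$; you have not reconnected to the contradiction hypotheses, and there is no evident algebraic bridge to your Step~1 conclusions $|a_\ell a_m|<|a_ka_m|$ and $|a_1a_i|<|a_1a_j|$ (which, incidentally, the paper never needs). Second, your ``algebraic fallback'' of iterating \prettyref{fact:f1} and eliminating auxiliary lengths is on shaky ground: Altman's Lemma~1, which the paper says it reproduces verbatim, is itself an angle-comparison proof, not a Ptolemy-style manipulation, so the assurance that ``a combination exists'' is not an assurance that it exists among sums of Ptolemy inequalities. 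You should look for the right angle inequalities rather than try to force the argument through \prettyref{fact:f1}.
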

\begin{proof}
Suppose for the sake of contradiction that $|a_ia_\ell|\leq\min\{|a_ia_k|,|a_ja_\ell|\}$. Denote by $x$ and $y$ the points where $a_1a_j$ and $a_ma_k$ intersect $a_ia_\ell$ (see
Figure \ref{f7}). Repeatedly using the fact that when $s, s'$ are two sides of a triangle, $|s| > |s'|$ iff the angle opposite $s$ is larger than the angle opposite $s'$, we have
\begin{align*}\angle a_jxa_{\ell}+\angle a_kya_i&>\angle a_ja_ia_{\ell}+\angle a_ka_{\ell}a_i\geq\angle a_ia_ja_{\ell}+\angle a_{\ell}a_ka_i\\
&>\angle a_1a_ja_m+\angle a_1a_ka_m\geq\angle a_ja_1a_m+\angle
a_ka_ma_1\,.\end{align*}
However, $\angle a_jxa_{\ell}+\angle a_kya_i=\angle a_ja_1a_m+\angle a_ka_ma_1$, which gives a contradiction.
\end{proof}

\begin{figure}[h]
\centering
      \subfigure[]{\label{f7}
		\includegraphics[scale=0.9]{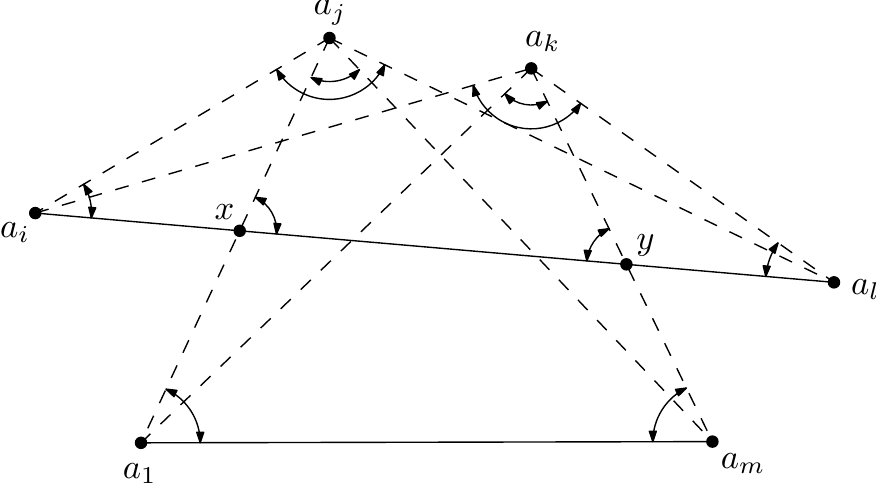}  \hspace{5mm}
      }
      \subfigure[]{\label{f8}
		\includegraphics[scale=0.9]{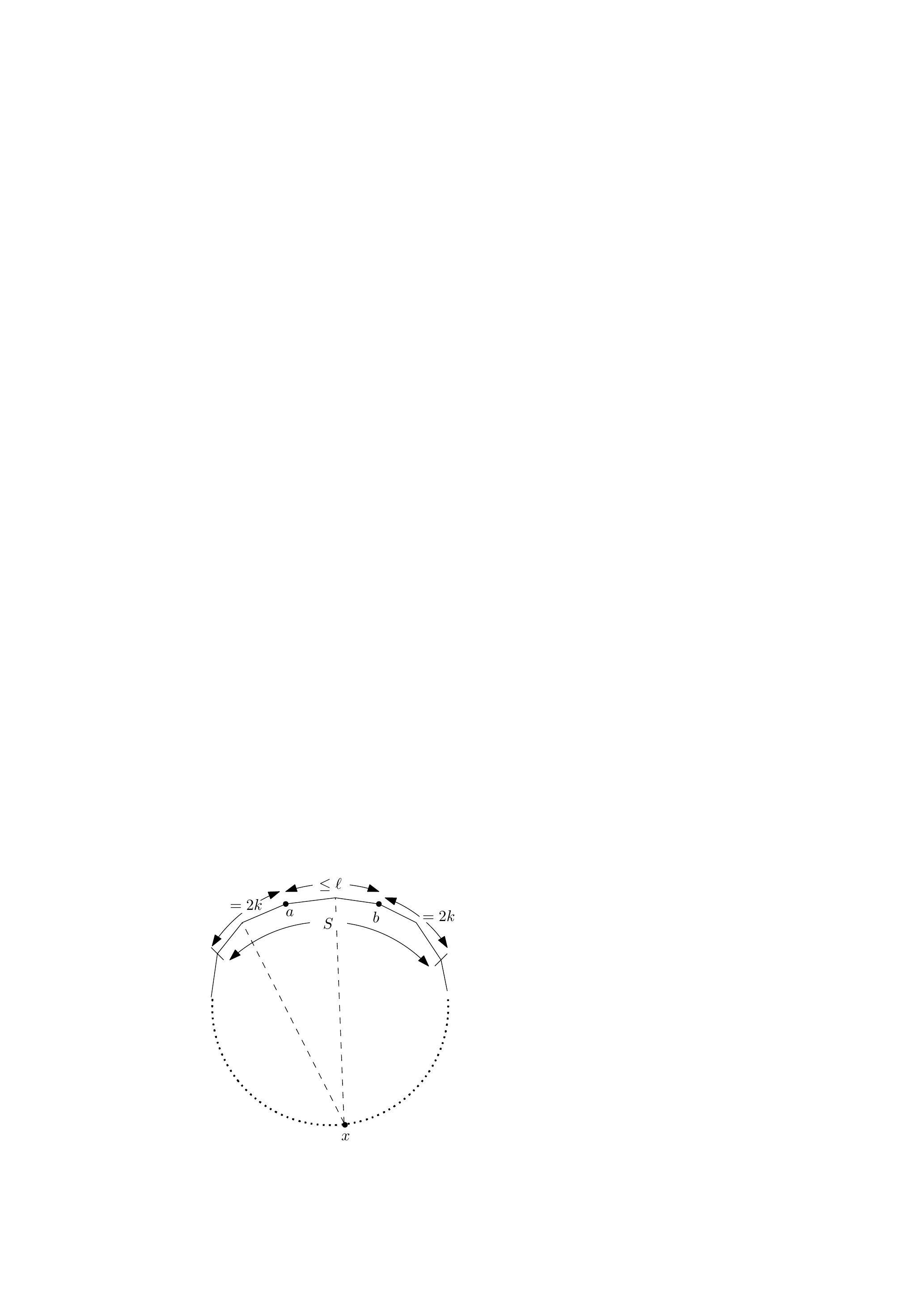}  \hspace{5mm}
      }
	 \caption{(a) Proof of Fact \ref{fact4}. (b) Proof of \prettyref{lemma:constant-new}.}\label{fig:45}
\end{figure}

\subsection{Counting Lemmas}\label{sec:constant}
First we complete the proof of \prettyref{theorem:2kn}, using Fact \ref{fact3}.
\begin{lemma}\label{lemma:klevel}
In any level there are at most $2k-1$ diagonals of length $\ge d_k$.
\end{lemma}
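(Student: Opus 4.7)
My plan is to imitate the sketch in the commented-out block, working inside a single level and then extracting the extreme ``long'' diagonals to apply Fact~\ref{fact3}.

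First I would set up a parametrization. Fix a level $L$. The defining relation $j+k\equiv i \pmod n$ means that, as we shift one endpoint clockwise by one, the other shifts counter-clockwise by one; so the diagonals of $L$ admit a natural linear ordering $D_1,D_2,\dots,D_m$ with $D_t=a_{j_t}a_{k_t}$ and $j_{t+1}=j_t+1$, $k_{t+1}=k_t-1$ (indices mod $n$), the vertices $a_{j_t},a_{j_{t+1}},\dots,a_{k_{t+1}},a_{k_t}$ occurring in clockwise order. (A routine case analysis in $n\bmod 2$ and $i\bmod 2$ takes care of the endpoints of the list; in all cases the level has size at most $\lceil n/2\rceil$, with consecutive diagonals sharing this ``one-step shift'' property.)

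Next I would isolate the extremes. Among the diagonals in $L$ of length $\ge d_k$, pick the smallest index $s$ and the largest index $t$ for which $|D_s|,|D_t|\ge d_k$; if no such diagonal exists we are trivially done, and if only one exists then $s=t$ and there is nothing to prove. Every top-$k$ diagonal in $L$ has parameter in $\{s,s+1,\dots,t\}$, so it suffices to bound $t-s+1$ from above by $2k-1$.

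The key step is to apply \prettyref{fact3} to the four vertices $(a,b,c,d):=(a_{j_s},a_{j_t},a_{k_t},a_{k_s})$, which by the parametrization appear in clockwise order around the polygon. We have $|bc|=|D_t|\ge d_k$ and $|ad|=|D_s|\ge d_k$, so Fact~\ref{fact3} (with $i=j=k$) yields that one of the two arcs, the one between $a$ and $b$ or the one between $c$ and $d$, contains at most $2k-3$ other polygon vertices. Because of the symmetric shift $j_{t}-j_{s}=t-s=k_{s}-k_{t}$, both arcs contain exactly $t-s-1$ vertices; hence $t-s-1\le 2k-3$, i.e.\ $t-s+1\le 2k-1$, which is the desired bound.

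The only real obstacle is bookkeeping: making sure that the parametrization $D_1,\dots,D_m$ behaves the same way for every level (so that $a,b,c,d$ really do appear in clockwise order regardless of the parities of $n$ and $i$, and the two arcs between the pairs of endpoints have equal size). Once this combinatorial setup is in place, the rest is a single invocation of Fact~\ref{fact3}.
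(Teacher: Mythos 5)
Your argument is correct and matches the paper's own proof: both parametrize the level as a linearly ordered fan of diagonals, take the two extreme diagonals of length $\ge d_k$ (indices $m$ and $M$ in the paper, $s$ and $t$ for you), and invoke Fact~\ref{fact3} with $i=j=k$ on the resulting clockwise quadruple to get $M-m-1\le 2k-3$. The paper simply relabels so the level is $L_0$ and works with $a_j a_{-j}$, which sidesteps the parity bookkeeping you mention but is otherwise the same computation.
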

\begin{proof}
Without loss of generality (by relabeling), we consider the level $L_0$. The diagonals of this level are $a_ja_{-j}$, with indices modulo $n$, for $0 < j < n/2$. Let $m > 0$ (resp.~$M$) be the minimal (resp.~maximal) $j$ such that $|a_ja_{-j}| \ge d_k$. Then by Fact \ref{fact3}, we see that $M-m-1 \le k+k-3$. So the number of top-$k$ diagonals in $L_0$ is bounded by $|\{m, m+1, \dotsc, M\}| =M-m+1\le 2k-1$, which gives the corollary.
\end{proof}

Next, we give the proof of \prettyref{lemma:constant-new}, which is needed in order to argue that our computational approach is correct.
\begin{proof}
We want to show that if $|ab| \ge d_k$, and $a$ and $b$ are separated by at most $\ell$ vertices, then the number of top-$k$ distances satisfies $m_{\le k} \le n+O(k^2(k+\ell)^2)$. Let $S$ be the interval obtained from this $[a, b]$ by extending onto $2k$ further points in both directions. By Fact \ref{fact3}, all edges of length $\geq d_k$ have at least one endpoint in $S$. Note $|S| = O(k+\ell).$

We will show an upper bound of $n+O(k^2(k+\ell)^2)$ on the number of edges $sx$ of length $\ge d_k$, with $s \in S, x \in V \bs S$. This will complete the proof since the only other top-$k$ distance edges must lie with both endpoints in $S$, and there are at most $O(k+\ell)^2$ such edges.

The key observation is that in the bipartite graph between $S$ and $V \bs S$ consisting of these edges, all but a constant number of vertices in $V \bs S$ have degree 1. Specifically, if $sx, s'x$ are both edges in this graph, then the location of $x$ is uniquely determined by $s, s', |sx|,$ and $|s'x|$; it follows that $\sum_x{\tbinom{\deg(x)}{2}}$ is at most $O((k+\ell)^2k^2)$, and consequently $\sum_{x : \deg(x) > 1} \deg(x) = O((k+\ell)^2k^2)$. We are then done by counting the endpoints of degree-1 vertices, of which there are at most $n$.
\end{proof}
\section{The Algorithm}\label{sec:algorithm}
The algorithm we use to prove \prettyref{theorem:all} examines distances among finite configurations of points in the plane. Informally, we examine all possible configurations of a bounded size, where a configuration includes all occurrences of top-$k$ distances in a few consecutive levels, and we try to establish that not too many top-$k$ distances can occur per level, averaged over a small interval of levels. Thus ultimately, the argument in our proof decomposes any global point set into local configurations of bounded size.

\subsection{The Goal}
Our computational goal will be to bound the number of long distances which can occur in a consecutive sequence of several levels. We begin by re-proving (for large $n$) Vesztergombi's result on counting the second-largest distances; it illustrates the type of computational result we need.
\begin{proposition}\label{proposition:ves}
We have $m_2 \le \frac{4}{3}n$ for large enough $n$.
\end{proposition}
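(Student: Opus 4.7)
The plan is an averaging argument over the level decomposition. Fix a small integer $w \ge 1$ to be chosen later. For each cyclic index $t \in \{0, 1, \dots, n-1\}$ consider the window $W_t := L_t \cup L_{t+1} \cup \dots \cup L_{t+w-1}$ of $w$ consecutive levels. Each edge of length $d_2$ lies in exactly one level, so in exactly $w$ cyclic windows. Hence if the computer can certify that every window $W_t$ contains at most $\tfrac{4w}{3}$ edges of length $d_2$, then summing over $t$ gives $w \cdot m_2 \le \tfrac{4w}{3} \cdot n$, which yields $m_2 \le \tfrac{4}{3}n$.

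To bound the number of $d_2$-edges per window, I would first observe that \prettyref{lemma:klevel} limits each level to at most $3$ diagonals of length $\ge d_2$, and that \prettyref{fact3} with $i=j=2$ forces these to lie at 3 consecutive ``heights'' in the level. So a window supports only finitely many combinatorial types of configurations: each type records which diagonals in each level are top-2 and labels each as $d_1$ or $d_2$. A computer program can enumerate these types and test each for geometric feasibility by checking that the constraints of Facts \ref{fact:f1}, \ref{fact:f2}, \ref{fact3}, and \ref{fact4}, applied to every relevant sub-quadrilateral of the configuration, are simultaneously satisfiable. We pick $w$ large enough for the search to certify the target bound $\tfrac{4w}{3}$ on the number of $d_2$-edges per window.

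The ``for large $n$'' hypothesis is needed to reduce a global problem to the local window analysis via \prettyref{lemma:constant-new}. Pick $\ell$ exceeding the window's extent. If some top-2 edge has at most $\ell$ vertices between its endpoints, then $m_{\le 2} \le n + C(2,\ell)$ already; this is less than $\tfrac{4}{3}n$ once $n \ge 3\,C(2,\ell)$. Otherwise every top-2 edge spans strictly more than $\ell$ vertices, which guarantees that each window genuinely looks like a local piece of a much longer polygon, so the abstract finite search is sound.

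The main obstacle will be choosing $w$ correctly and verifying exhaustively that Facts \ref{fact:f1}--\ref{fact4} suffice to eliminate every infeasible high-count configuration; if a candidate $w$ fails, one iterates with a larger $w$ at the cost of a larger search. A secondary subtlety is translating the geometric facts into a form the program can check (inequalities among abstract distance variables and their induced angle relations) without dropping any constraint needed for completeness of the elimination.
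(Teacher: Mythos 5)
The overall skeleton is right---reduce to the case of no ``short'' top-$2$ diagonal via \prettyref{lemma:constant-new}, then certify a local bound on $d_2$-counts over a window of consecutive levels by a finite computer search---but your averaging mechanism is genuinely different from, and strictly stronger than, what the paper proves, and that difference matters.

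You ask the computer to certify a \emph{uniform sliding-window} bound: a single fixed $w$ such that \emph{every} cyclic window of $w$ consecutive levels contains at most $\tfrac{4w}{3}$ diagonals of length $d_2$; then double-counting (each level lies in $w$ windows) yields $m_2 \le \tfrac{4}{3}n$. The paper instead certifies a \emph{disjunctive per-level} statement (\prettyref{lemma:ves}): for each starting level $i$ there exists \emph{some} length $m' \in \{1, 2, 3, 4\}$, which may depend on $i$, such that levels $i, \dots, i+m'-1$ together contain at most $\lfloor \tfrac{4}{3} m' \rfloor$ edges of length $d_2$. This weaker statement is then fed into a greedy covering argument, carving the $n$ levels into variable-length segments each of average density $\le \tfrac{4}{3}$. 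The program described in the paper searches for a ``bad'' configuration in which level $1$ has $> \alpha$, levels $1$--$2$ have $> 2\alpha$, \ldots, levels $1$--$L$ have $> L\alpha$ target diagonals; termination of that search is exactly the disjunction, not your window bound.

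Your uniform window claim does not follow from the paper's disjunction, and it is not clear a priori that any $w$ works. For instance, the cyclic per-level-count pattern $3, 2, 0, 0, 3, 1, 0, 3, 1, 0$ (period $10$) satisfies the disjunction (take $m' = 4$ at the level with count $3$ followed by $2$; $m' = 2$ at the level with count $2$; $m' = 3$ at the levels with count $3$ followed by $1, 0$; $m' = 1$ at the rest), yet the window of size $12$ starting at the first ``$3$'' contains $18 > \lfloor 48/3 \rfloor = 16$ target diagonals. It does satisfy the window bound for $w = 10$ or $w = 20$, but only because the period happens to divide those lengths; in general there is no a priori bound on the periods that could arise, so no single $w$ is guaranteed to work, and you would have no principled way to pick one. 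The paper's variable-length segmentation sidesteps this entirely: it is the weakest local statement that still yields the global bound, and it is precisely what the branch-and-bound search can certify.

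Separately, the claim that \prettyref{fact3} with $i=j=2$ forces the top-$2$ diagonals of a level into $3$ consecutive heights is correct (it is how \prettyref{lemma:klevel} is proved), and your use of \prettyref{lemma:constant-new} to dispatch the special-diagonal case is exactly the paper's. But to make your proof go through you would need to replace the uniform-$w$ averaging with the paper's variable-length greedy covering, or else prove (not just assert) that some specific $w$ admits the window bound.
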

\begin{proof}
We prove the theorem for $n \ge 3 \cdot C(16, 2)$ with $C$ as in \prettyref{lemma:constant-new}.
Let a \emph{special diagonal} be a diagonal of length $d_2$ or longer, whose endpoints are separated by at most 16 vertices. If there is any special diagonal, we are done by \prettyref{lemma:constant-new}. So we may assume there are no special diagonals.

Using our computer program, we establish the following lemma.
\begin{lemma}\label{lemma:ves}In every point set $S$ without special diagonals, for every level $i$, at least one of the following is true:
\begin{packed_enum}
\item at most $1 = \lfloor 1 \cdot \frac{4}{3} \rfloor$ diagonal in level $i$ has length $d_2$;
\item at most $2 = \lfloor 2 \cdot \frac{4}{3} \rfloor$ diagonals in levels $i$ and $i+1$ have length $d_2$;
\item at most $4 = \lfloor 3 \cdot \frac{4}{3} \rfloor$ diagonals in levels $i, \dotsc, i+2$ have length $d_2$;
\item at most $5 = \lfloor 4 \cdot \frac{4}{3} \rfloor$ diagonals in levels $i, \dotsc, i+3$ have length $d_2$.
\end{packed_enum}
\end{lemma}
Now let us see how this gives the desired result.
Taking $i=1$, the four cases above establish that for some $1 \le \gamma_1 \le 4$, the number of $d_2$'s in levels $1, \dotsc, \gamma_1$ is at most $\frac{4}{3}\gamma_1$. Applying the same logic to $i=\gamma_1+1$, we get that there is some $1 \le \gamma_2 \le 4$ such that the number of $d_2$'s in levels $\gamma_1+1, \dotsc, \gamma_1+\gamma_2$ is at most $\frac{4}{3}\gamma_2$.

We continue defining further $\gamma_i$'s in the same way until $\sum_{i=1}^x \gamma_i \equiv \sum_{i=1}^y \gamma_i \pmod{n}$ for some $x<y$. Summing a contiguous subset of these bounds, the number of $d_2$'s in levels from $1+\sum_{i=1}^x \gamma_i$ to $\sum_{i=1}^y \gamma_i$ is at most $\frac{4}{3}$ per level on average. But this sum counts each of the $n$ levels an equal number of times, so the number of $d_2$'s overall is at most $\frac{4}{3}n$.
\end{proof}

The computer program's goal is thus to prove a general version of Lemma~\ref{lemma:ves}: given a \emph{target ratio} $\alpha$ and \emph{target distances} (a subset of $\{d_1, d_2, \dotsc, d_k\}$), find a constant $m$ so that every level $i$ admits $1 \le m' \le m$ such that $\le m' \cdot \alpha$ target lengths occur in levels $i, \dotsc, i+m'$. The program searches for a point set with $> \alpha$ target diagonals in level 1, $> 2\alpha$ in level 2, etc. If the search terminates, the above proof shows the number of target distances is $\le \alpha n$. The hypothesis that no special diagonals exist is used only indirectly by the program, explained below.

Our algorithm works with \emph{configurations} consisting of two disjoint intervals of points, and an assignment of a distance from $\{d_1, d_2, \dotsc, d_k, ``< d_k"\}$ to each diagonal spanning the two intervals. We thereby obtain analogues of Lemma~\ref{lemma:ves} by checking all possible configurations up to some finite size. For this to work, \prettyref{fact:f2} is crucial since it implies that all of the top-$k$ distances in $\ell$ consecutive levels have all of their endpoints in two intervals of bounded size. We use an incremental branch-and-bound search: it exhaustively searches all possibilities, but in an efficient way where large sections of the search space can be eliminated at once. Each individual step of the algorithm corresponds to an application of one of the Facts \ref{fact:f1}--\ref{fact4}. The lack of special diagonals allows us to focus on \emph{disjoint} interval pairs.
The Java implementation is available at
\begin{center}\url{http://sourceforge.net/projects/convexdistances/}.\end{center}

\subsection{Configurations}
In more detail, our algorithm maintains a set of \emph{configurations}. Each configuration has two disjoint intervals of points from $S$; then for each diagonal generated by one point from each interval, the configuration stores a set of possible values for the distance between those two points. Arbitrarily name one interval the \emph{top} and denote its points as $\{t_i\}_i$, with $t_{i+1}$ following $t_i$ in clockwise order, and name the other interval the \emph{bottom} with points $\{b_i\}_i$, and $b_{i-1}$ following $b_i$ in clockwise order. Then we denote the set of possible distances between $t_i$ and $b_j$ as $D[i, j]$; in each configuration $D[i, j]$ is a subset of $\{1, 2, \dotsc, k, \infty\}$ where $x \in D[i, j]$ means that $d_x$ is a possible value for the distance $|t_ib_j|$, while $\infty \in D[i, j]$ means that it is possible for $|t_ib_j|$ to be shorter than $d_k$. (So typical steps in our program use special cases to reason with ``$d_\infty$" distances correctly.) Reiterating, a configuration consists of a top interval of indices, a bottom interval of indices, and for each top-bottom pair a subset of $\{1, 2, \dotsc, k, \infty\}$.

We assume that $t_ib_j$ is in level number $j-i$ (modulo $n$), which is without loss of generality. To gain some intuition and exhibit the notation, it is helpful to look at a couple of examples. Our examples will be drawn from actual point sets and therefore each $D[i, j]$ will be just a singleton, in contrast to the larger sets $D[i, j]$ typically occurring in the algorithm. The first example, shown in \prettyref{fig:regular}, is a regular polygon of odd order. The second example, shown in \prettyref{fig:ves}, exhibits the extremal construction of Vesztergombi for second distances~\cite{ve3}.

\begin{figure}[h]
\hfill
\includegraphics{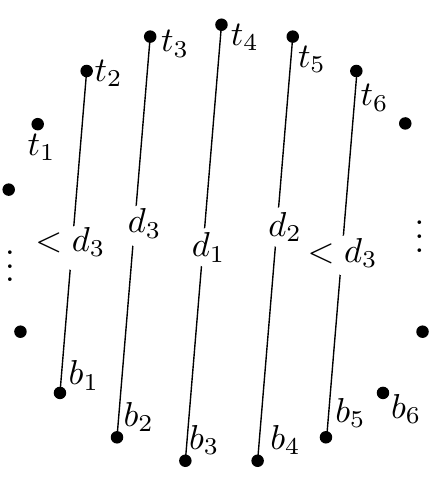}
\hfill
\includegraphics{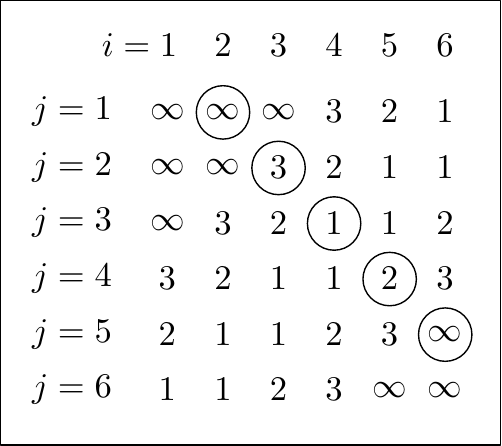}
\hfill
\phantom{}
\caption{Left: an odd regular polygon, with a top and bottom interval. Right:
the corresponding values of $D$, where entry $x$ in column $i$, row $j$ indicates $D[i, j] = \{x\}$. One level is illustrated on the left and circled on the right.}\label{fig:regular}
\end{figure}

\begin{figure}
\hfill
		\includegraphics[scale=0.9]{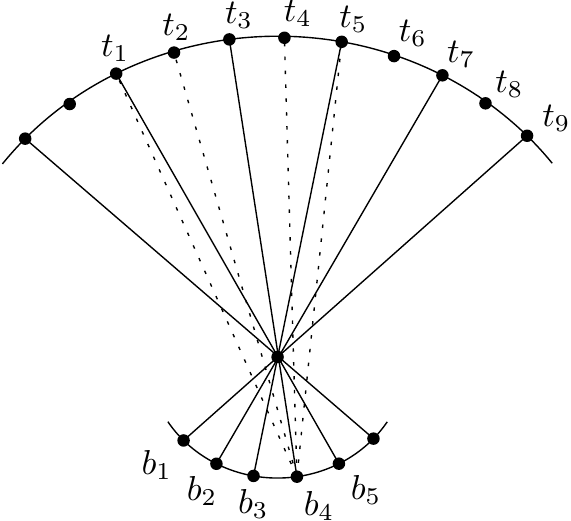}
\hfill
		\includegraphics[scale=1]{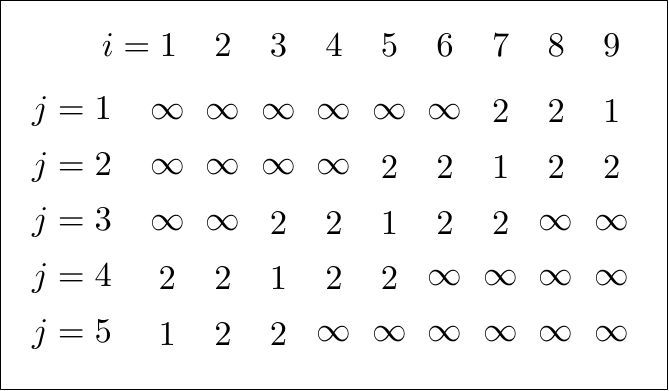}
\hfill
\phantom{}
\caption{Left: an illustration of Vesztergombi's construction with $m_2 = \frac{4}{3}n-O(1)$. Some diagonals of length $d_1$ and $d_2$ are shown (solid and dotted, respectively). Right: the corresponding configuration; again, entry $x$ in column $i$, row $j$ indicates $D[i, j] = \{x\}$.}\label{fig:ves}
\end{figure}

\subsection{Methodology}
Here is an example of a typical step in the algorithm, shown in \prettyref{fig:step}. Suppose some configuration includes points $t_1, t_2, b_2, b_1$, suppose that $D[1,1]=D[2,2]=\{2\}$, $D[1,2]=\{2,3,\infty\}$ and that $D[2,1]=\{1,2,3,\infty\}$. Then using Fact~\ref{fact:f1}, we know that $|t_1b_2|+|t_2b_1| > |t_1b_1|+|t_2b_2|$. As the right-hand side equals $2d_2$ and the maximum possible length of $t_1b_2$ is $d_2$, we can deduce that $|t_2b_1| > d_2$ and so we may update the configuration via $D[2,1] := \{x \in D[2, 1] \mid x < 2\} = \{1\}$.

\begin{figure}
\hfill
\includegraphics{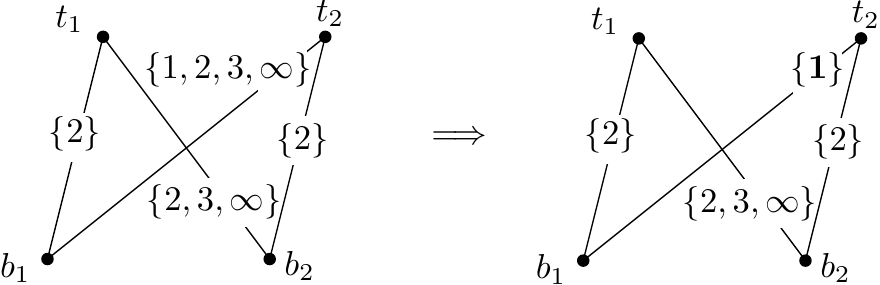}
\hfill
\phantom{}
\caption{A typical step of the algorithm, using Fact~\ref{fact:f1}.}\label{fig:step}
\end{figure}

The program uses Facts \ref{fact:f1}--\ref{fact4} in ways analogous to the above example. Whenever one of the facts is applicable, we use it to reduce the size of one set $D$ in the configuration. We use Fact~\ref{fact4} only when $a_1, a_i, a_j$ lie in the top interval and $a_k, a_l, a_n$ lie in the bottom or vice-versa.

Our algorithm also makes use of another easy observation. In any instance $S$, it cannot be true that both $d_1 + d_3 > d_2 + d_2$ and $d_1 + d_3 < d_2 + d_2$. Hence using Fact~\ref{fact:f1}, a quadruple $t, t', b', b$ (in that cyclic order) with $|tb|=|t'b'|=d_2, |tb'|=d_1, |t'b|=d_3$ cannot co-exist with another quadruple $\hat{t}, \hat{t}', \hat{b}', \hat{b}$ with $|\hat{t}\hat{b}|=d_1, |\hat{t}'\hat{b}'|=d_3, |\hat{t}\hat{b}'|=|\hat{t}'\hat{b}|=d_2$. More generally, given a configuration we can deduce from any $i, j, i', j'$ with each $D[i, j], D[i, j'], D[i', j], D[i', j']$ singletons other than $\{\infty\}$ that an inequality of the form $d_w + d_x > d_y + d_z$ is true; in testing a configuration for validity our program will reject any configuration where a contradiction arises from the set of all such pairwise inequalities. This is done by testing the associated digraph of $\tbinom{k+1}{2}$ pairs for acyclicity. (We also include arcs of the form $d_x + d_y > d_x + d_z$ whenever $y < z$.)

In some situations none of these facts are applicable; say for example, if each $D[i, j]$ is equal to $\{1, 2, \infty\}$, we cannot conclude any further information. In this case we use an approach which is similar to recursion or \emph{branch-and-bound} in this situation, which works as follows. Find some $i,j$ with $|D[i, j]|>1$, let $X$ denote $D[i,j]$. We then replace this configuration with two new configurations: each of the new ones is almost identical to the original, except that in one we take $D[i, j] = \min_{x \in X} x$ and in the other we take $D[i, j] = X \bs \{\min_{x \in X} x\}$. In a little more detail, while we are examining the levels from 1 to $L$, we only perform branching on diagonals in levels 1 to $L$, (i.e.~only when $1 \le j-i \le L$) and any other non-singleton $D[i, j]$ does not entail branching. This was faster in practice than branching on every $D[i, j]$.

\subsection{Initializing and Growing Configurations}
Recall that our theorems are all of the following form, for a set $T$ of positive integers and some real $\alpha$: \begin{equation}
\sum_{t \in T} m_t \le \alpha  n +O(1).\tag{$\spadesuit$}\label{eq:prototype}
\end{equation}
We call a \emph{target distance} any distance $d_t$ with $t \in T$. We use $k$ to represent the largest number in $T$.

We begin this detailed section by explaining why it suffices to examine configurations of bounded size to bound the number of target distances in $L$ consecutive levels. The key tool is Fact~\ref{fact3}. Namely, suppose $t_0b_1$ is any diagonal in level 1 with length $|t_0b_1| \ge d_k$, and consider any top-$k$ distance diagonal $e$ in levels $1, \dotsc, L$. If $e$ crosses $t_0b_1$, then $t_0$ (resp.~$b_1$) is within $L$ steps along the boundary from an endpoint of $e$ (resp.~the other endpoint of $e$). If $e$ and $t_0b_1$ don't cross, one endpoint of $e$ is at most $2k$ steps from $t_0$ or $b_1$ by Fact~\ref{fact3}, and the other endpoint of $e$ is at most $2k+L$ points away from the other of $t_0$ or $b_1$. Summarizing, in either case, $e$ has one endpoint in the interval $I_t$ consisting of vertices at most $2k+L$ steps from $t_0$, and $e$'s other endpoint lies in the interval $I_b$ consisting of vertices at most $2k+L$ steps from $b_1$; and this holds for all top-$k$ distance diagonals $e$ in levels $1, \dotsc, L$.

Our program makes valid deductions whenever these intervals are disjoint, which is false only when $t_0$ and $b_1$ are within $2(2k+L)$ steps of one another on the boundary. Set $\ell = 2(2k + L)$ and define a \emph{special diagonal} to be one with length $\ge d_k$ and at most $\ell$ vertices between its endpoints. Recall that $|t_0b_1| \ge d_k$, so the program's deductions are valid unless there was a special diagonal. This explains the choice of $16 = 2(2\cdot2 + 4)$ in \prettyref{proposition:ves} and justifies our general approach.

In the rest of this section we explain some of the implementation details. The program begins working with a configuration consisting of a single diagonal $t_0b_1$ of length $\ge d_k$, and we assume without loss of generality that there are no diagonals $t_ib_{i+1}$ such that $i<0$ and $|t_ib_{i+1}| \ge d_k$. Thus the top and bottom intervals begins as the singleton sets $\{t_0\}, \{b_1\}$.

We will now enlarge these configurations. Reviewing our proof strategy, the program must enumerate all possible configurations such that level 1 has more than $\alpha$ diagonals of a target length, \emph{and} levels 1 and 2 together have more than $2\alpha$, etc, with the hope being that once the number of levels is high enough we find that no such configurations exist, since this would give a result like \prettyref{lemma:ves}.

Note that, by our choice of $t_0$ and $b_1$ which normalize our indices, in any convex point set, all level-1 diagonals of the target distances are of the form $t_ib_{i+1}$ for $i>1$, and by Fact~\ref{fact3} they also satisfy $i \le 2k-2$, so crucially, their possible positions are confined to an interval of bounded size. We now determine which of these diagonals have target lengths by \emph{exhaustive guessing}, a term which simply means trying all possibilities. In detail, first, exhaustively guess the smallest $i>0$ for which $t_ib_{i+1}$ is a target distance, then the second-smallest, etc. When the top and bottom intervals are enlarged, each new $D[i, j]$ is set to $\{1, \dotsc, k, \infty\}$ by default, meaning that no assumptions are made on the distance. When $i$ is guessed as a minimal new level-1 diagonal for which $t_ib_{i+1}$ is a target distance, rather than the defaults we set $D[i, i+1] = T$ and $D[i', i'+1] := \{1, \dotsc, k, \infty\} \bs T$ for all new $i' < i$.

\begin{figure}[bht]\noindent
\begin{boxedminipage}{6.5in}
\begin{packed_enum}
\item Initialize a configuration with intervals $\{t_0\}, \{b_1\}$ and $D[0, 1]$ set to $T$ (all target distances)
\item For $L=1, 2, \dotsc$
\item[] \begin{packed_enum}
\item Extend the configurations by exhaustively guessing all diagonals of target lengths in level $L$, extending leftwards first if $L>1$, and then rightwards in all cases.
\item Keep only configurations with more than $\alpha L$ target distances in levels $1, \dotsc, L$.
\item Stop if no configurations remain. \end{packed_enum}
\medskip
\item Upon extending a configuration, {\bf check} it: \item[] \begin{packed_enum}
\item Use Facts \ref{fact:f1}--\ref{fact4} to perform deductions.
\item Check that distance pairs are consistent.
\item If $|D[i, j]| > 1$ for some diagonal $t_ib_j$ in one of the first $L$ levels, partition it into two configurations and {\bf check} both (recursively).\end{packed_enum}
\end{packed_enum}
\end{boxedminipage}
\caption{Sketch of the algorithm.}\label{fig:psuedocode}
\end{figure}

After each new diagonal is added, we re-apply Facts \ref{fact:f1}--\ref{fact4} in order to make additional deductions and eliminate any impossible configuration; and we split any non-singleton sets $D$ in the first level, as described earlier.

After this exhaustive guessing, we have collected all possible configurations. We keep only those for which level 1 has more than $\alpha$ diagonals of the target lengths. If any exist, we grow them in all possible ways to 2-level configurations, using exhaustive guessing like that explained above, except that we expand ``to the left" before expanding ``to the right" (for level 1, only rightwards expansion was needed due to our choice of $t_0$ and $b_1$). Again, we prune those which have no more than $2\alpha$ target distance in the first two levels.

We repeat the process described in the previous paragraph over and over, increasing the number of levels by 1 each time. If the program terminates eventually, it implies a result of the form like \prettyref{lemma:ves} and consequently that \eqref{eq:prototype} holds for this choice of $T$ and $\alpha$. We give a high-level review of the algorithm in \prettyref{fig:psuedocode}.

\section{Results: Proof of \prettyref{theorem:all}}\label{sec:results}
Each row in Table~\ref{table:1} corresponds to an execution of our program which terminated. In other words, each execution establishes that an analogue of \prettyref{lemma:ves} holds, and we consequently deduce \prettyref{theorem:all} using reasoning as in the proof of \prettyref{proposition:ves}. Each line proves
\begin{equation}\sum_{t \in T}m_t \le \alpha n  \textrm{ for $n > C(k, 2(2k+L))/(\alpha-1)$},\label{eq:again}\tag{$\clubsuit$}\end{equation}
where $k$ is the largest element of $T$, and $C$ is the constant from \prettyref{lemma:constant-new}. Note that the first two lines of Table~\ref{table:1} correspond to results that were already known. The running times are from a computer with a 2 GHz processor.
The program was written in Java, and is available on SourceForge\footnote{\url{http://sourceforge.net/projects/convexdistances/}}.
For $T = \{1, 2, 3, 4, 5\}$ or $T = \{5\}$ the program ran out of memory before obtaining any reasonable result.
\begin{table}[htb]
\begin{center}\begin{tabular}{ccccc}
$T $&$ \alpha $&$ L $&time (s) & tightness of result \\
\hline
$\{1,2\} $&$ 2 $&$2 $&$<1 $ & tight (odd regular) \\
$\{2\} $&$ 4/3 $&$4 $&$<1 $ & tight \cite{ve3} \\
$\{1,2,3\} $&$ 3 $&$ 3 $&$<1 $ & tight (odd regular) \\
$\{3\} $&$ 3/2 $&$ 9 $&$ 5 $ & abstractly tight, Fig.~\ref{fig:m3} \\
$\{2,3\} $&$ 9/4 $&$ 6 $&$ 1 $ & abstractly tight, Fig.~\ref{fig:m23} \\
$\{1,3\} $&$ 2 $&$ 4 $&$ <1 $ & tight (odd regular) \\
$\{1,2,3,4\} $&$ 4 $&$ 3 $&$ 68 $ & tight (odd regular) \\
$\{4\} $&$ 13/8 $&$ 27 $&$ 50890 $ & unknown\\
\end{tabular}
\end{center}
\caption{The terminating executions of our program, each one proving \eqref{eq:again} for that $\alpha$ and $T$. \emph{Tight} means convex point sets are known with $\sum_{t \in T}m_t = \alpha n - O(1)$ and \emph{abstractly tight} means some periodic configuration has $\sum_{t \in T}m_t = \alpha n$ but we could not realize it convexly in the plane.}
\label{table:1}
\end{table}

\section{Abstract Tightness}\label{sec:tightness}
Our computer program can also generate tight examples. In Figure~\ref{fig:m3} we show two periodic configurations with $m_3 = \frac{3}{2}n$ with periods of 6 and 8 levels, respectively. (No other example has period less than 14.) We were not able to embed these examples as convex point sets in the plane, and at the same time we did not disprove that they were embeddable. Based on our attempts, it seems like there is no simple periodic embedding respecting the natural symmetries of the distance configurations. A disproof of realizability could be used in the program to get stronger results. For $m_2 + m_3 = \frac{9}{4}n$ we also have an abstractly tight periodic example which we could not realize (Fig.~\ref{fig:m23}).
\begin{figure}[htb]
\begin{center}
\includegraphics[scale=0.85]{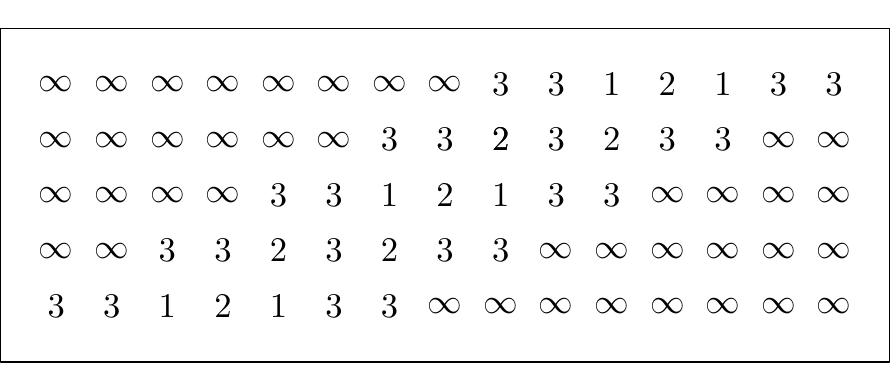}
\includegraphics[scale=0.85]{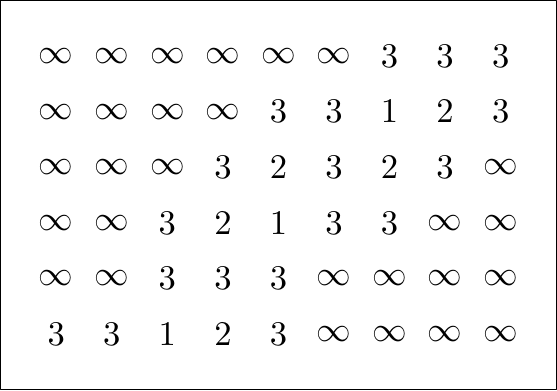}
\end{center}
\caption{Two unrealized periodic configurations with $m_3 = \frac{3}{2}n$. Rows and columns are two intervals of vertices, and entry $i$ (resp.~$\infty$) means distance $d_i$ (resp.~$<d_3$).}\label{fig:m3}
\end{figure}
\begin{figure}[htb]
\begin{center}
\includegraphics[scale=0.85]{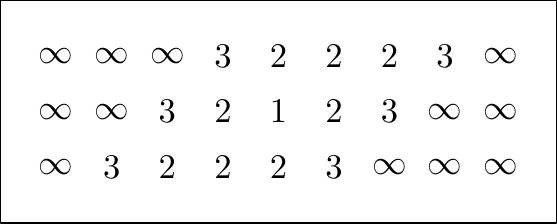}
\end{center}
\caption{An unrealized periodic configurations with $m_2 + m_3 = \frac{9}{4}n$.}\label{fig:m23}
\end{figure}

\section{Future Directions}\label{sec:future}
Our program is essentially a depth-first search; each configuration examined by the program has a unique ``parent" configuration from which it was grown. Thus, it would be possible to rewrite the program so as to use a smaller amount of memory and thereby possibly obtain results with smaller $\alpha$ or larger $k$; and a distributed implementation should also be straightforward.

It would be good to come up with constructions exhibiting better lower bounds. For example, no construction is known where $m_3/n$ is asymptotically greater than 4/3.

Our approach constitutes an abstract generalization of the original problem of bounding sums of the $m_i$'s in convex point sets. Vesztergombi~\cite{ve3} considered an abstraction as well, using only a subset of the facts we applied here. Can \prettyref{conjecture:unitdist} of Erd\H{o}s and Moser be violated in either of these abstractions?

Finally, can the functions $C, C'$ in \prettyref{lemma:constant-new} and \prettyref{lemma:constant-old} be improved?

\noindent
{\bf Acknowldegments.} We thank the referees for useful feedback, and K.~Vesztergombi for helpful discussions.



\typeout{Label(s) may have changed. Rerun}
\end{document}